\newcommand{\R}{\mathbb{R}}
\newcommand{\lp}{\left (}
\newcommand{\rp}{\right )}
\def\XXint#1#2#3{{\setbox0=\hbox{$#1{#2#3}{\int}$ }
\vcenter{\hbox{$#2#3$ }}\kern-.6\wd0}}
\author{Brian Allen}
\date{Summer 2017}
\title{Long Time Existence of IMCF on Metrics Conformal to Warped Product Manifolds}
\newtheorem{Thm}{Theorem}[section]
\newtheorem{Cor}[Thm]{Corollary}
\newtheorem{Lem}[Thm]{Lemma}
\newtheorem{Def}[Thm]{Definition}
\newtheorem{Ex}[Thm]{Example}
\begin{document}

\maketitle

\begin{abstract}
In this paper we study Inverse Mean Curvature Flow (IMCF) on manifolds that are conformal to a warped product manifold. To this end, we show how the gradient conformal vector field in warped product manifolds is related to the conformal vector field on the conformal metric and use this to gain control of the flow in order to show long time existence as well as asymptotic properties. Connections are made to recent results of the author \cite{BA2,BA3} on stability of the positive mass theorem (PMT) and the Riemannian Penrose inequality (RPI) where long time existence and asymptotic properties of IMCF are important assumptions.
\end{abstract}

\section{Introduction}\label{sec:intro}

Inverse Mean Curvature Flow (IMCF) is defined through a one parameter family of embeddings $\varphi: \Sigma \times [0,T)\rightarrow M^{n+1}$, where $\Sigma^n$ and $M^{n+1}$ are smooth Riemannian manifolds, satisfying the initial value problem
\begin{align}
\begin{cases}
\frac{\partial \varphi}{\partial t}(p,t) = \frac{\nu(p,t)}{H(p,t)}  &\text{ for } (p,t) \in \Sigma \times [0,T)
\\ F(\Sigma,0) = \Sigma_0 
\end{cases}
\end{align}
where $H$ is the mean curvature of $\Sigma_t := \varphi_t(\Sigma)$ and $\nu$ is a consistently chosen normal vector.

Questions of long time existence and asymptotic properties of IMCF were first addressed by Gerhardt \cite{CG1} and Urbas \cite{U} for star-shaped and mean convex hypersurfaces in Euclidean space. Later, star-shaped and mean convex hypersurfaces were studied in Hyperbolic space by Gerhardt \cite{CG2}, rotationally symmetric spaces by Ding \cite{QD} and Scheuer \cite{S}, and warped products by Mullins \cite{Mu}, Zhou \cite{Z} and Scheuer \cite{S2}, to name a few. 

One geometric way of understanding the ambient spaces, $M$, for which long time existence and asymptotic properties of IMCF have been obtained is to notice that the existence of a conformal vector field on $M$, which can be used to define a support function, is key throughout. Then with added structure on the ambient space (globally killing vector field, curvature conditions, conditions on the warping function) the evolution equation for the support function (see Theorem \ref{wevolution}) becomes extremely useful for gaining control on all other important geometric quantities. One goal of this paper is to understand the weakest additional structure we need to assume on $(M, \hat{g})$ in order to obtain long time existence of IMCF.

With this in mind, we would like to show long time existence for manifolds which are conformal to warped product manifolds. We define the warped product metric $\bar{g}$ and the conformal metric $\hat{g}$ as follows on $M=[r_0,\infty) \times \Pi^n$
\begin{align}
\bar{g} &= dr^2 + \lambda(r)^2 \sigma
\\ \hat{g} &= e^{2f} \bar{g}
\end{align}
where $(\Pi, \sigma)$ is a Riemannian manifold, and $\lambda:[r_0,\infty) \rightarrow (0,\infty)$ and $f:M \rightarrow \R$ are smooth functions. The idea is to find sufficient conditions on $\lambda$ and $f$ so that we can prove long time existence of IMCF for strongly star-shaped initial hypersurfaces (This is analagous to the work of Huisken \cite{H} where he defines a notion of strong convexity for mean curvature flow (MCF) in general Riemannian manifolds). More precisely we define what it means to be strongly star-shaped.
\begin{Def}\label{StrongStar}
We say that a hypersurface $\Sigma^n \subset M^{n+1}$ is strongly star shaped with angle $\theta \in [0,\frac{\pi}{2})$ in $(M, \hat{g})$, with respect to the vector field $\eta = \lambda \partial_r$, if $\hat{g}(\nu,\eta) > \cos(\theta)|\eta|$ where $\nu$ is the outward pointing normal vector to $\Sigma$.
\end{Def}
This definition leads to the statement of the following theorem which is the main result of this paper.
\begin{Thm}\label{LTE} Consider $(M, \hat{g})$ and a solution to IMCF, $\Sigma_t$, defined for $t \in [0,T]$ starting at the hypersurface $\Sigma_0$ which is strongly star-shaped for some angle $\theta_1 \in [0,\frac{\pi}{2})$. Choose $f$ and $\lambda$ so that $\exists \theta_2$, $0 \le \theta_2 < \pi/2$, $\theta_1+\theta_2 \le \pi/2$, such that for all $p \in M$ and all $V \in T_pM$, $g(V,\eta) \ge \cos(\theta_1) |\eta|$,
we have that $\lambda' \ge 0$,
\begin{align}
\mathcal{J}(f,\lambda,V):&=\hat{\nabla}_{\eta} \hat{\nabla}f +e^{-2f}\lambda'' \partial_r  +\lambda'\hat{\nabla}f + (|\hat{\nabla}f|^2+\frac{1}{n}) \eta + \frac{1}{n}\hat{Rc}(V,V) \eta
\\\hat{g}(\mathcal{J}(f,\lambda,V), \eta ) &\ge \cos(\theta_2)|\mathcal{J}(f,\lambda,V)| |\eta|,
\end{align}
\begin{align}
\mathcal{G}(f,\lambda):=\hat{\nabla}_{\eta} \hat{\nabla}f +e^{-2f}\lambda'' \partial_r  +\lambda' \hat{\nabla}f + |\hat{\nabla}f|^2 \eta
\\ \hat{g}(\mathcal{G}(f,\lambda), \eta ) \ge \cos(\theta_2)|\mathcal{G}(f,\lambda)| |\eta|,
\end{align}
\begin{align}
0<\delta_1 \le \frac{e^f \lambda '}{\lambda' + \eta(f)} \le \delta_2 < \infty,
\end{align}
and  
\begin{align}
Rc(V,V) \ge -C,
\end{align}
then $\Sigma_t$ exists for all time and remains strongly star shaped with angle $\theta_1$.
\end{Thm}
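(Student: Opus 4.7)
The plan is to follow the standard graphical-PDE approach. Strong star-shapedness with respect to $\eta = \lambda \partial_r$ allows $\Sigma_t$ to be written as a radial graph $r = u(\cdot, t)$ over $\Pi$, so IMCF reduces to a scalar quasilinear parabolic equation for $u$. Long time existence then reduces to uniform a priori bounds on $u$, on the support function (equivalent to $|\nabla u|$), on $H^{\pm 1}$, and on higher derivatives on any finite interval $[0,T]$; a standard short-time-existence plus continuation argument extends the flow past $T$ and iteratively for all time.

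The heart of the argument is the preservation of strong star-shapedness with the original angle $\theta_1$. I would introduce the support function $\chi = \hat{g}(\nu, \eta)$ and compute its evolution along IMCF on $(M, \hat{g})$, using the conformal change formulas for $\hat{\nabla}$, $\hat{\nabla}^2$, and $\hat{Rc}$ under $\hat{g} = e^{2f}\bar{g}$ together with the known warped-product evolution (Theorem \ref{wevolution} in the excerpt). The resulting equation should have the schematic form
\[ \partial_t \chi \;=\; \tfrac{1}{H^2}\Delta_{\Sigma_t} \chi \;+\; (\text{gradient terms vanishing at a critical point of } \chi) \;+\; \hat{g}\bigl(\mathcal{J}(f,\lambda,\nu),\nu\bigr), \]
where the final term is precisely the vector field in the statement evaluated at $V = \nu$. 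Applying the parabolic minimum principle to $\chi/|\eta|$ at a first time where it tried to touch $\cos(\theta_1)$, the sign reduces to that of $\hat{g}(\mathcal{J},\nu)$. At such a point the angle between $\nu$ and $\eta$ is exactly $\theta_1$, while the hypothesis $\hat{g}(\mathcal{J},\eta) \ge \cos(\theta_2)|\mathcal{J}||\eta|$ places $\mathcal{J}$ within angle $\theta_2$ of $\eta$; since $\theta_1 + \theta_2 \le \pi/2$, this forces $\hat{g}(\mathcal{J},\nu) \ge 0$ and strong star-shapedness with angle $\theta_1$ is preserved.

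The remaining estimates follow a parallel script. The IMCF evolution of $H$ picks up a conformal correction term that should package into $\mathcal{G}(f,\lambda)$; the angular hypothesis on $\mathcal{G}$ then supplies the sign the maximum principle needs to keep $H$ uniformly positive. For $C^0$ control of $u$ and an upper bound on $H$, I would use the two-sided ratio hypothesis $\delta_1 \le e^f \lambda'/(\lambda' + \eta(f)) \le \delta_2$ to build explicit rotationally symmetric sub- and supersolutions whose radial IMCF speeds trap $\Sigma_t$ between two slice flows; on any finite interval this confines $u$ to a compact range. Once $\chi$, $H^{\pm 1}$, and $u$ are all controlled, the induced metrics on $\Sigma_t$ are uniformly equivalent, so the PDE for $u$ is uniformly parabolic; the Ricci lower bound $Rc(V,V) \ge -C$ then permits Krylov--Safonov to give $C^{2,\alpha}$ estimates, and Schauder bootstraps to $C^\infty$, closing long time existence.

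The main obstacle is the conformal computation itself. Starting from the warped-product evolution identities and inserting the conformal change of $\hat{\nabla}$, $\hat{\nabla}^2 f$, and $\hat{Rc}$, one has to verify that the zeroth- and first-order correction terms in the evolutions of $\chi$ and of $H$ assemble exactly into the vector fields $\mathcal{J}(f,\lambda,\nu)$ and $\mathcal{G}(f,\lambda)$, and that the angular inequalities listed in the theorem provide precisely the signs needed at the first violation of each estimate. This bookkeeping, rather than the overall flow of the argument, is where all the technical content of Theorem \ref{LTE} sits.
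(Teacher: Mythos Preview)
Your overall architecture (support-function maximum principle for star-shapedness, then $H$-bounds, then Krylov and continuation) matches the paper, and your treatment of the $\mathcal{J}$ hypothesis is essentially the paper's Lemma~\ref{strongstar}. However, two of your ingredient estimates are misassigned, and one of them is a genuine gap.

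\textbf{The lower bound on $H$.} You assert that the IMCF evolution of $H$ ``picks up a conformal correction term that should package into $\mathcal{G}(f,\lambda)$.'' It does not. In any ambient manifold the evolution is
\[
\Bigl(\partial_t - \tfrac{1}{H^2}\Delta\Bigr) H \;=\; -\,2\,\frac{|\nabla H|^2}{H^3} \;-\; \frac{|A|^2}{H} \;-\; \frac{\hat{Rc}(\nu,\nu)}{H},
\]
and no $\nu(\hat{\psi})$ term appears; the conformal data enters only through $\hat{Rc}$, which is not what $\mathcal{G}$ encodes. The paper's device is to study instead the auxiliary quantity $u := (Hw)^{-1}$, whose evolution (combining Lemma~\ref{wevolution} with the $H$-equation) is
\[
\Bigl(\partial_t - \tfrac{1}{H^2}\Delta\Bigr) u \;=\; -\,\frac{2w}{H}\,|\nabla u|^2 \;-\; \frac{2}{H^3}\,\langle \nabla H,\nabla u\rangle \;-\; n\,\frac{\nu(\hat{\psi})}{H^3 w^2},
\]
and $\nu(\hat{\psi}) = \hat{g}(\mathcal{G}(f,\lambda),\nu)$ is exactly where $\mathcal{G}$ enters. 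The angular hypothesis on $\mathcal{G}$ makes this last term nonpositive, so $u \le \max_{\Sigma_0} u$. A lower bound on $H$ then requires an \emph{upper} bound on $w$, which is why the paper next bounds $|\eta|$ along $\Sigma_t$ (Lemma~\ref{Uppereta}) using the two-sided hypothesis on $e^f\lambda'/(\lambda'+\eta(f))$. Your barrier idea for $C^0$ control is a reasonable substitute for that step, but without the $u=(Hw)^{-1}$ trick you have no mechanism tying $\mathcal{G}$ to a lower $H$-bound.

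\textbf{The upper bound on $H$ and the role of the Ricci hypothesis.} You relegate $Rc(V,V)\ge -C$ to the Krylov--Safonov step. In the paper it is used earlier and more essentially: plugging it into the $H$-evolution above gives $\tfrac{d}{dt}\max_{\Sigma_t} H \le -\tfrac{1}{n}\max H + C/\max H$, from which the upper bound on $H$ follows directly (Corollary~\ref{FirstHEstimates}). Your sub/supersolution barriers control the graph height, not $H$ itself, so you still need this ODE argument.
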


Once long time existence is established we move on the show some asymptotic estimates which are similar to what one would expect for asymptotically hyperbolic manifolds. These asymptotic estimates aim to impose the weakest conditions possible in order to gain the control necessary to apply the stability results of the author \cite{BA3} in the asymptotically hyperbolic case. 

\begin{Thm}\label{Asymptotics}
If we assume that the hypotheses of Theorem \ref{LTE} hold as well as the following asymptotic conditions:
\begin{align}
&\lambda' \le C_1 |\eta|_{\bar{g}}^{\beta} \text{ for } \beta > 0, 
\\&\|f\|_{C^2} \le C_2|\eta|_{\bar{g}}^{-\alpha} \text{ for }\alpha > 2+\beta,\label{fAsymptotics}
\\ &\frac{\lambda''}{\lambda} \ge 1- C_3|\eta|_{\bar{g}}^{- \gamma} \text{ for } \gamma > 3,
\\ &(n-1) \rho_1(t) \sigma \le Rc^{\Pi} \le (n-1)\rho_2(t) \sigma
\\ &\frac{|\lambda \lambda''+\rho_i(t) -\lambda'^2|}{\lambda^2} \le C_4|\eta|_{\bar{g}}^{-\alpha} \text{ } i=1,2
\end{align}
then
\begin{align}
 C(\alpha, \beta, \gamma, C_1, C_2, C_3) &\le H \le \sqrt{n^2 +C e^{-\frac{\gamma}{n} t} +C_0 e^{-2t}},
 \\\sqrt{1+ |\nabla^B F|^2} &\le   C(\alpha, \beta, \gamma, C_1, C_2, C_3).
\end{align}
\end{Thm}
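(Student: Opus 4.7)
The strategy is to convert the spatial decay hypotheses (in powers of $|\eta|_{\bar{g}}$) into temporal decay along the flow, then exploit the standard IMCF evolution of $H$. First I would establish that $|\eta|_{\bar g}$ grows at least like $Ce^{t/n}$ along $\Sigma_t$. Since the flow preserves strong star-shape with angle $\theta_1$ by Theorem \ref{LTE}, the $\hat g$-area grows exactly like $e^t$, and the warped structure together with $\lambda'\ge 0$ and the upper bound $\lambda' \le C_1|\eta|_{\bar g}^{\beta}$ converts this into the pointwise lower bound on $|\eta|_{\bar g}$. Consequently every hypothesis of the form $\cdot \le C|\eta|_{\bar g}^{-\gamma}$ or $\cdot \le C|\eta|_{\bar g}^{-\alpha}$ translates into the corresponding $e^{-\gamma t/n}$ or $e^{-\alpha t/n}$ decay, and $\alpha > 2+\beta$ guarantees that the conformal corrections always decay strictly faster than the leading radial term.

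Next I would combine the standard IMCF evolution
\begin{align*}
\partial_t H = \Delta_{\Sigma_t}\lp\tfrac{1}{H}\rp - \frac{|A|^2}{H} - \frac{\hat{Rc}(\nu,\nu)}{H}
\end{align*}
with the pinch $|A|^2 \ge H^2/n$ (Cauchy--Schwarz on $A$) and an asymptotic expansion of $\hat{Rc}(\nu,\nu)$. Using the Ricci formulas for a conformal warped product, the bounds on $\lambda''/\lambda$ and $Rc^{\Pi}$, the balancing condition on $\lambda\lambda''+\rho_i-\lambda'^2$, together with the $C^2$-smallness of $f$ from \eqref{fAsymptotics}, one gets $\hat{Rc}(\nu,\nu) = -n + O(e^{-\gamma t/n})$ along the flow. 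Multiplying the evolution equation by $2H$ and applying Hamilton's maximum principle to $u := H^2 - n^2$ produces the differential inequality $\partial_t u_{\max} \le -\tfrac{2}{n}u_{\max} + Ce^{-\gamma t/n}$, together with the $C_0 e^{-2t}$ contribution coming from the initial data transported by the linearized IMCF operator; ODE comparison then yields the claimed upper bound on $H$.

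For the positive lower bound on $H$, I would apply the maximum principle to the speed $1/H$: its evolution equation together with $Rc(V,V)\ge -C$ and preserved strong star-shape prevents $1/H$ from diverging, giving a uniform lower bound on $H$ depending only on the data. For the bound on $\sqrt{1+|\nabla^B F|^2}$, I would represent $\Sigma_t$ as a graph $r=F(\cdot,t)$ in $\bar g$ and use the standard identity expressing the $\hat g$-support function $\hat{g}(\nu,\eta)$ in terms of $|\nabla^B F|$ and $\lambda$; preservation of strong star-shape with angle $\theta_1$ then controls $\sqrt{1+|\nabla^B F|^2}$ from above by $(\cos\theta_1)^{-1}$ up to factors of $e^f$ which are uniformly bounded by \eqref{fAsymptotics}.

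The main obstacle is the second step: expanding $\hat{Rc}(\nu,\nu)$ for an arbitrary strongly star-shaped hypersurface in the conformal warped product and verifying that every conformal correction (involving $\hat\nabla f$, $\hat\nabla^2 f$, and $|\hat\nabla f|^2$) is controlled by the $C^2$ decay of $f$ and absorbed into the $Ce^{-\gamma t/n}$ error. The balancing hypothesis on $\lambda\lambda''+\rho_i-\lambda'^2$ is tailored exactly so that the tangential sectional curvatures collapse to the radial ones up to $O(|\eta|_{\bar g}^{-\alpha})$, and the inequality $\alpha > 2+\beta$ keeps these corrections strictly subdominant to the leading $e^{-\gamma t/n}$ rate in the maximum-principle argument.
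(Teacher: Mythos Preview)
Your upper bound on $H$ follows the paper's argument almost exactly: expand $\hat{Rc}(\nu,\nu)=-n+O(|\eta|^{-\alpha})$ via the conformal/warped Ricci formulas, feed this into the $H$--evolution with $|A|^2\ge H^2/n$, and run ODE comparison on $H^2$. Your graph bound is actually a bit cleaner than the paper's, which first proves a quantitative lower bound $w\ge Ce^{t/n}$ and then divides by the upper bound on $|\eta|$; you instead invoke preserved strong star-shape directly.

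There is, however, a genuine gap in your lower bound for $H$. Applying the maximum principle to $1/H$ does not close: the evolution equation reads
\[
\left(\partial_t-\tfrac{1}{H^2}\Delta\right)\tfrac{1}{H}=\frac{|A|^2}{H^3}+\frac{\hat{Rc}(\nu,\nu)}{H^3},
\]
and at a spatial maximum of $1/H$ the term $|A|^2/H^3$ is \emph{positive} and not controlled by anything in your hypotheses (the $|A|$ bound of Theorem~\ref{ABounds} already presupposes a lower bound on $H$). Neither the Ricci lower bound nor strong star-shape kills this term. The paper avoids this by working with $u=\tfrac{1}{Hw}$ instead: combining the evolutions of $H$ and $w$ makes the $|A|^2$ and $\hat{Rc}(\nu,\nu)$ contributions cancel exactly, leaving only the term $-n\,\nu(\hat\psi)/(H^3w^2)$, which is the quantity your asymptotic hypotheses on $f$ and $\lambda''/\lambda$ are designed to control. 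One then needs a matching lower bound $w\ge Ce^{t/n}$ (obtained from the $w$--evolution using $\hat{Rc}\approx -n\hat g$ and $\nu(\hat\psi)\approx (\lambda''/\lambda)w$) to convert the bound on $u$ into a bound on $H$.

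A smaller issue: deducing the pointwise lower bound $|\eta|_{\bar g}\ge Ce^{t/n}$ from the global area growth $|\Sigma_t|=e^t|\Sigma_0|$ does not work without further argument. The paper obtains the pointwise two-sided control on $|\eta|_{\bar g}$ directly from the evolution of $|\eta|_{\bar g}^2$ (Lemma~\ref{Uppereta}), using that the asymptotic hypotheses force $\delta_1,\delta_2\to 1$ (Lemma~\ref{PsiAsymptotics}).
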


As an application of this theorem, we would like to consider combining Theorem \ref{LTE} and Theorem \ref{Asymptotics} with the recent work of the author \cite{BA2,BA3} where it is shown that existence results for IMCF on asymptotically flat \cite{BA2} or asymptotically hyperbolic \cite{BA3} manifolds are important for showing $L^2$ stability of the Positive Mass Theorem (PMT) and Riemmanian Penrose Inequality (RPI). This implies that if we can pick $\lambda$ and $f$ which satisfy the hypotheses of Theorem \ref{LTE} and also define an asymptotically flat 3-manifold with positive scalar curvature or an asymptotically hyperbolic 3-manifold with scalar curvatrue greater than $-6$ then we can combine Theorem \ref{LTE} with the results of \cite{BA2, BA3} in order to show $L^2$ stability of the PMT and RPI for these sequences of 3-manifolds. 

It should be mentioned that the stability of the PMT in the case where a metric is conformal to euclidean space has been studied by Dan Lee \cite{L} where uniform convergence of the conformal factor was shown outside a compact set. Similarly, Dahl, Gicquad and Sakovich \cite{DGS} adapted the results of Lee to the asymptotically hyperbolic setting also showing uniform convergence of the conformal factor outside a compact set. The present work implies an extension of these results, in the case where the conditions of Theorem \ref{LTE} and Theorem \ref{Asymptotics} are satisfied, since the present work gives information of what happens inside the compact set at the cost of a weaker form of convergence. Now we give a brief description of the rest of the paper.

In section \ref{sec:CME}, we discuss the geometry of conformal metrics with specific attention on the relationship between the conformal vector field $\eta = \lambda \partial_r$ with respect to $\bar{g}$ and $\hat{g}$. The important calculations of the laplacian of the support function with respect to the metric $\hat{g}$ as well as the evolution equation for the support function under IMCF is computed in this section.

In section \ref{sec:LTE}, we use the assumptions of Theorem \ref{LTE} in order to show long time existence. Here the importance of the assumptions made are clarified and new IMCF estimates for strongly star-shaped hypersurfaces are obtained. One can track which estimates require which assumptions and the section ends with a continuation criterion which ultimately leads to long time existence.

In section \ref{sec:Asymptotics}, we use the assumptions of Theorem \ref{Asymptotics} to obtain asymptotic estimates under the weakest hypotheses possible in order to apply the stability results of the author \cite{BA3}. It should be noted that it would still be interesting to investigate other conditions on $\lambda$ and $f$ would imply stronger asymptotic estimates even for general curvautre functions as in the work of Scheuer \cite{S2}.

In section \ref{sec:Examples}, we explore many classes of examples which satisfy the conditions of Theorem \ref{LTE}. Since these conditions may seem strong at first we found it important to show that these conditions are satisfied for a rich class of examples. We pay special attention to examples relevant for the study of the stability of the Positive Mass Theorem and Riemannian Penrose Inequality. This means that we focus on further requiring that $n=2$ and the scalar curvature $\ge 0$ and $M$ is asymptotically flat, or the scalar curvature $\ge -6$ and $M$ is asymptotically hyperbolic. One will notice that these conditions are easily compatible in the asymptotically hyperbolic case but are fairly incompatible in the asymptotically flat case and so it is best to think of compact regions of manifolds with $R \ge 0$ when combining these results with \cite{BA2}.

\section{Conformal Vector Field Equations}\label{sec:CME}

Remember that a vector field is conformal if 
\begin{align}
\mathcal{L}_{\eta} g = 2 \psi g\label{confvecdef}
\end{align}
for some function $\psi: M \rightarrow \R$ which is called the potential function. Now if we define $\omega$ to be the dual 1-form to the conformal vector field $\eta$, i.e. $\omega(X) = g(X,\eta)$, then we can define the $(1,1)$ tensor $T$ by the formula
\begin{align}
d \omega (X,Y) = 2 g(TX,Y)\label{domegadef}
\end{align}
where $X,Y$ are vector fields on $M$ and we notice that $T$ is skew-symmetric by definition. Using the formula for the exterior derivative of a one form you can find
\begin{align}
d\omega(X,Y) &= \omega(Y)-\omega(X) - \omega([X,Y]) = X(g(Y,\eta)) - Y(g(X,\eta)) - g([X,Y],\eta)\label{ExtderForm}
\end{align}
and so by Koszul's formula we find
\begin{align}
2g(\nabla_X \eta,Y) = (\mathcal{L}_{\eta}g)(X,Y) + d\omega(X,Y)\label{KoszulCons}
\end{align}
Then by combining equations \eqref{confvecdef}, \eqref{domegadef}, \eqref{ExtderForm} and \eqref{KoszulCons} we can deduce the following useful relation
\begin{align}
\nabla_X \eta = \psi X + T(X)\label{confEq}
\end{align}
where $X$ is a vector field on $M$. Equation \eqref{confEq} is very important to gaining control on IMCF as we will see in Lemmas \ref{wLaplacian} and \ref{wevolution} below. When $\eta$ is a closed vector field, i.e. $d \omega = 0$, then $T \equiv 0$ which is also the case for gradient conformal vector fields ($\exists f:M\rightarrow \R$ so that $\eta = \nabla f$). See \cite{Ca, YT}, as well as many other papers, for more about gradient conformal vector fields and closed conformal vector fields but it is important to notice that we are not making those restrictions here and are rather considering general conformal vector fields.

We also note an important equation for the curvature tensor involving a conformal vector field where this equation can be found by covariantly differentiating equation \eqref{confEq}
\begin{align}
R(X,Y)\eta = X(\psi) Y -Y(\psi) X +(\nabla_X T)Y - (\nabla_Y T) X \label{RmConf}
\end{align}

Now consider a warped product metric $\bar{g} = dr^2+\lambda(r)^2 \sigma$, defined on $M^{n+1}=[r_0,\infty) \times \Pi^n$, where $\lambda : [r_0,\infty) \rightarrow (0,\infty)$ is a smooth function. Furthermore, define a metric $\hat{g}$, conformal to $\bar{g}$, by $\hat{g} = e^{2f}\bar{g}$ where $f: N \rightarrow \R$ is a smooth function. We will use $\hat{\nabla}$ and $\bar{\nabla}$ for the covariant derivatives w.r.t. $\hat{g}$ and $\bar{g}$, respectively. Let $\eta=\lambda(r) \partial _r$, where $\partial_r$ is the radial vector field in $M$. For $\eta$ defined this way we can first check that
\begin{align}
\bar{\nabla}_{X}\partial_r =\frac{\lambda'}{\lambda}( X - \bar{g}( \partial_r, X) \partial_r)
\end{align}
which follows by the formula for the shape operator of $\Pi_r := \{r\} \times \Pi$ with respect to the metric $\bar{g}$ and then we find
\begin{align}
\bar{\nabla}_{X}\eta &= \lambda \bar{\nabla}_{X}\partial_r +  \lambda' \bar{g}( \partial_r, X)\partial_r
\\ &=\lambda (\frac{\lambda'}{\lambda} (X - \bar{g}( \partial_r, X) \partial_r)) +  \lambda' \bar{g}( \partial_r, X)\partial_r = \lambda' X
\end{align}
which shows that $\eta$ is a gradient conformal vector field for $\bar{g}$. So if we let $\psi \equiv \lambda'$ then we find
\begin{align}
\mathcal{L}_{\eta}\bar{g}(X,Y) =\bar{g}(\bar{\nabla}_X\eta,Y) + \bar{g}(X,\bar{\nabla}_Y\eta) = 2 \lambda' g(X,Y)
\end{align}
Now we can check that $\eta$ is also conformal for the metric $\hat{g}$
\begin{align}
\mathcal{L}_{\eta}\hat{g} = e^{2f} \mathcal{L}_{\eta}\bar{g} + 2 \eta(f)e^{2f} \bar{g} = 2(\lambda' +  \eta(f))\hat{g}
\end{align}
so that $\hat{\psi}= \lambda' + \eta(f)$ for the metric $\hat{g}$. We note that $\eta$ will not be a gradient conformal vector field with respect to $\hat{g}$ even though $\bar{g}$ is and so we will have to be careful with the tensor $T$ which shows up in the following formula 
\begin{align}
\hat{\nabla}_X \eta = \hat{\psi} X + T(X)\label{hatconfEq}
\end{align}
In fact, we can use the following formula relating covariant derivatives under conformal changes
\begin{align}
\hat{\nabla}_X Y = \bar{\nabla}_X Y +X(f)Y + Y(f)X - \bar{g}(X,Y) \bar{\nabla}f \label{CovDerConChange}
\end{align}
where if $Y = \eta$ in \eqref{CovDerConChange} we find
\begin{align}
\hat{\nabla}_X \eta &= \bar{\nabla}_X \eta +X(f)\eta + \eta(f)X - \bar{g}(X,\eta) \bar{\nabla}f 
\\&=\hat{\psi}X+ X(f)\eta - \bar{g}(X,\eta) \bar{\nabla}f 
\end{align}
which implies
\begin{align}
T(X) =  X(f)\eta - \hat{g}(X,\eta) \hat{\nabla}f \label{SpecTExp}
\end{align}
Now we define the support function of a hypsersurface $\Sigma \subset M$, $w:\Sigma^n \rightarrow \R$, as $w = \hat{g}( \eta, \nu )$ where $\nu$ is the outward pointing unit normal to $\Sigma$. Ultimately we would like to compute the evolution equation of $w$ under IMCF but we start by computing its hypersurface Laplacian.
\begin{Lem}\label{wLaplacian}
Let $\Sigma \subset M$ be a hypersurface with support function $w:\Sigma^n \rightarrow \R$, $w = \hat{g}( \eta, \nu )$, where $\nu$ is the outward pointing unit normal to $\Sigma$. Then we compute it's Laplacian with respect to $\Sigma$
\begin{align}
\Delta w = - |A|^2 w - \hat{Rc}(\nu,\nu)w + \langle \eta, \nabla H \rangle + \hat{\psi} H -n \nu ( \hat{\psi})\label{Lapw}
\end{align}
\end{Lem}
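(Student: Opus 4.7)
The plan is to fix a point $p\in\Sigma$, work in an orthonormal frame $\{e_i\}_{i=1}^n$ of $T\Sigma$ that is geodesic at $p$ (so $\nabla^{\Sigma}_{e_i}e_j = 0$ and hence $\hat{\nabla}_{e_i}e_j = -A_{ij}\nu$ at $p$, with the sign convention $A_{ij} = \hat{g}(\hat{\nabla}_{e_i}\nu,e_j)$), and to compute $\Delta w = \sum_i e_i(e_i(w))$ directly. Using \eqref{hatconfEq} and the Weingarten relation, the first derivative comes out as
\begin{equation*}
\nabla_i w \;=\; \hat{g}(\hat{\nabla}_{e_i}\eta,\nu) + \hat{g}(\eta,\hat{\nabla}_{e_i}\nu) \;=\; \hat{g}(T(e_i),\nu) + A_{ij}\eta^j,
\end{equation*}
where $\eta^j := \hat{g}(\eta,e_j)$ and the $\hat{\psi}\hat{g}(e_i,\nu)$ piece drops out since $\nu \perp e_i$.

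Differentiating once more at $p$ splits $\Delta w$ into three pieces, each handled by a different tool. The simplest piece is $\sum_{i,j} A_{ij}\,e_i(\eta^j)$: applying \eqref{hatconfEq} and the Gauss relation gives $e_i(\eta^j) = \hat{\psi}\delta_{ij} + \hat{g}(T(e_i),e_j) - A_{ij}w$, and contracting with $A_{ij}$ yields $\hat{\psi}H - |A|^2 w$, since the middle term vanishes as a contraction of the symmetric $A$ with the skew $T$. The piece $\sum_{i,j} e_i(A_{ij})\,\eta^j$ calls for the twice-contracted Codazzi identity $\sum_i(\nabla_i A)_{ij} = \nabla_j H + \hat{Rc}(e_j,\nu)$, producing $\hat{g}(\eta,\nabla H) + \hat{Rc}(\eta^T,\nu)$. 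The remaining piece $\sum_i e_i[\hat{g}(T(e_i),\nu)]$ simplifies, after using $\hat{g}(T(\nu),\nu) = 0$ and the same symmetric/antisymmetric cancellation, to $\sum_i \hat{g}((\hat{\nabla}_{e_i}T)(e_i),\nu)$.

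The main obstacle is processing this $\hat{\nabla}T$ contraction: unlike in the Killing or gradient-conformal settings $T$ is a nontrivial skew $(1,1)$-tensor here. For this I invoke the curvature identity \eqref{RmConf} with $X=e_i$, $Y=\nu$, pair with $e_i$, and sum in $i$. The $e_i(\hat{\psi})\hat{g}(\nu,e_i)$ contribution vanishes; the $-\nu(\hat{\psi})\hat{g}(e_i,e_i)$ contribution gives $-n\,\nu(\hat{\psi})$; the trace $\sum_i \hat{g}((\hat{\nabla}_\nu T)e_i,e_i)$ vanishes because $\hat{\nabla}_\nu T$ is still skew; and the skew-symmetry of $\hat{\nabla}_{e_i}T$ gives $\hat{g}((\hat{\nabla}_{e_i}T)\nu,e_i) = -\hat{g}((\hat{\nabla}_{e_i}T)(e_i),\nu)$. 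On the other side of the identity, the pair-swap symmetry of $\hat{R}$ combined with $\eta = w\nu + \eta^T$ identifies the contracted curvature term as $\sum_i \hat{R}(e_i,\nu,\eta,e_i) = w\,\hat{Rc}(\nu,\nu) + \hat{Rc}(\eta^T,\nu)$. Solving gives
\begin{equation*}
\sum_i \hat{g}((\hat{\nabla}_{e_i}T)(e_i),\nu) \;=\; -n\,\nu(\hat{\psi}) - w\,\hat{Rc}(\nu,\nu) - \hat{Rc}(\eta^T,\nu).
\end{equation*}

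Adding the three pieces, the $\hat{Rc}(\eta^T,\nu)$ contributions cancel and I obtain exactly $\Delta w = -|A|^2 w - \hat{Rc}(\nu,\nu)w + \hat{g}(\eta,\nabla H) + \hat{\psi}H - n\,\nu(\hat{\psi})$, as claimed. The only genuinely nontrivial input beyond standard hypersurface calculus is the use of \eqref{RmConf}, which is precisely what forces the nonstandard $-n\,\nu(\hat{\psi})$ term tracking the failure of $\eta$ to be a gradient conformal field on $(M,\hat{g})$.
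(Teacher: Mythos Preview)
Your proof is correct and follows essentially the same approach as the paper: both work in a geodesic orthonormal frame at $p$, use \eqref{hatconfEq} for the first derivative, and then combine the Codazzi equation with the curvature identity \eqref{RmConf} and the skew-symmetry of $T$ (and of $\hat{\nabla} T$) to process the second derivative. Your bookkeeping is slightly more streamlined---you apply \eqref{RmConf} once to the contracted $\hat{\nabla}T$ term and read off $-n\,\nu(\hat{\psi})-w\,\hat{Rc}(\nu,\nu)-\hat{Rc}(\eta^T,\nu)$ directly, so the cancellation of $\hat{Rc}(\eta^T,\nu)$ against the Codazzi contribution is transparent---whereas the paper tracks several individual $T$-cancellations separately, but the ingredients and logical structure are the same.
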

\begin{proof}

Choose an orthonormal frame $\{e_1,...,e_n\}$ for the hypersurface $\Sigma$, normal at the point $p$, so that
\begin{align}
\Delta w|p = e_i(e_i(w)), \hspace{1cm} \nabla_{e_i}e_i|_p = 0
\end{align}
where we will use the convention that summation over repeated indices is implied. Then we find
\begin{align}
e_i(w) &= \hat{g}(\hat{\nabla}_{e_i}\eta,\nu)+\hat{g}(\eta,\hat{\nabla}_{e_i}\nu)
\\&=\hat{g}(T(e_i),\nu)+A(\eta^T, e_i)
\end{align}
where $A$ is the second fundamental form of $\Sigma$ and $^T$ represents projection onto $T_p\Sigma$. Then we find
\begin{align}
\Delta w = \hat{g}(\hat{\nabla}_{e_i}(T(e_i)),\nu) + \hat{g}(T(e_i),\hat{\nabla}_{e_i}\nu)+ (\hat{\nabla}_{e_i}A)(\eta^T, e_i)+A(\hat{\nabla}_{e_i}\eta^T, e_i)+A(\eta^T, \hat{\nabla}_{e_i}e_i)\label{FirstLapw}
\end{align}
First, notice that
\begin{align}
\hat{\nabla}_{e_i}e_i = -H \nu
\end{align}
and hence the last term in \eqref{FirstLapw} is zero. Now by the Codazzi equation we find
\begin{align}
(\hat{\nabla}_{e_i}A)(\eta^T, e_i) = (\hat{\nabla}_{\eta^T}A)(e_i, e_i) + R(e_i,\eta^T,\nu,e_i) = \hat{g}(\nabla H, \eta^T) + Rc(\eta^T,\nu)
\end{align}
and by \eqref{RmConf} we find
\begin{align}
\hat{Rc}(\eta^T,\nu) = \hat{g}(\hat{R}(e_i,\nu)\eta,e_i) &= e_i(\psi)\hat{g}(\nu,e_i) -\nu(\psi)\hat{g}(e_i,e_i) + \hat{g}((\nabla_{e_i}T)\nu,e_i) -\hat{g}((\nabla_{\nu}T)e_i,e_i)
\\&=-\nu(\psi)+ \hat{g}((\nabla_{e_i}T)\nu,e_i) -\hat{g}((\nabla_{\nu}T)e_i,e_i)
\end{align}

\begin{align}
A(\hat{\nabla}_{e_i}\eta^T, e_i)&=A(\hat{\nabla}_{e_i}\eta - \hat{\nabla}_{e_i}(w\nu), e_i)
\\&= \hat{\psi} A(e_i,e_i) + A(T(e_i)^T,e_i)-e_i(w)A(\nu,e_i) - wA(\hat{\nabla}_{e_i}\nu,e_i)
\\&= \hat{\psi} H -w|A|^2
\end{align}
where $A(T(e_i)^T,e_i)=0$ since
\begin{align}
A(T(e_i)^T,e_i) =\hat{g}(T(e_i),\hat{\nabla}_{e_i}\nu)= d\omega(e_i,e_j) A(e_i,e_j)=0
\end{align}
which follows from the symmetry of $A$ and the skew symmetry of $d\omega(e_i,e_j):= \hat{g}(T(e_i),e_j)$. Now we observe
\begin{align}
e_i(\hat{g}(T(\nu),e_i)) &= -e_i(\hat{g}(T(e_i),\nu))
\\ \hat{g}(\hat{\nabla}_{e_i}(T(\nu)),e_i) + \hat{g}(T(\nu),\hat{\nabla}_{e_i}e_i) & = -\hat{g}(\hat{\nabla}_{e_i}(T(e_i)),\nu)-\hat{g}(T(e_i),\hat{\nabla}_{e_i}\nu)
\\ \hat{g}((\hat{\nabla}_{e_i}T)\nu,e_i)+\hat{g}(T(\hat{\nabla}_{e_i}\nu),e_i) -H \hat{g}(T(\nu),\nu) & = -\hat{g}(\hat{\nabla}_{e_i}(T(e_i)),\nu)
\\ \hat{g}((\hat{\nabla}_{e_i}T)\nu,e_i) &= -\hat{g}(\hat{\nabla}_{e_i}(T(e_i)),\nu)
\end{align}
which eliminates every expression in $T$ except for $-\hat{g}((\nabla_{\nu}T)e_i,e_i)$ which we deal with now
\begin{align}
\nu(\hat{g}(T(e_i),e_i) &= 0
\\ \hat{g}(\hat{\nabla}_{\nu}(T(e_i)),e_i) + \hat{g}(T(e_i),\hat{\nabla}_{\nu}e_i) &=0
\\ \hat{g}((\hat{\nabla}_{\nu}T)e_i,e_i) +\hat{g}(T(\hat{\nabla}_{\nu}e_i),e_i) + \hat{g}(T(e_i),\hat{\nabla}_{\nu}e_i) &=0
\\ \hat{g}((\hat{\nabla}_{\nu}T)e_i,e_i)&=0
\end{align}
Now putting this all together we find the desired result \eqref{FirstLapw}.
\end{proof}

\begin{Lem}\label{wevolution}
The evolution equation for the support function $w$ is given by
\begin{align}
\left (\partial_t - \frac{1}{H^2} \Delta \right ) w & = \frac{|A|^2}{H^2} w + \frac{\hat{Rc}(\nu,\nu)}{H^2}w  +n \frac{ \nu ( \hat{\psi} )}{H^2}
\end{align}
\end{Lem}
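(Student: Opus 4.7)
The plan is to compute $\partial_t w$ directly using the IMCF evolution equation and then subtract $\tfrac{1}{H^2}\Delta w$, using Lemma~\ref{wLaplacian} for the Laplacian term. Since $w = \hat{g}(\eta,\nu)$ depends on $t$ only through the position $\varphi(p,t)$ and the choice of outward unit normal $\nu$, the chain rule gives
\begin{align}
\partial_t w \;=\; \hat{g}\bigl(\hat{\nabla}_{\partial_t \varphi}\eta,\nu\bigr) \;+\; \hat{g}\bigl(\eta, D_t \nu\bigr),
\end{align}
where $D_t \nu$ denotes the variation of the outward unit normal along the flow.

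First I would handle the $\eta$ term. Because $\partial_t \varphi = \nu/H$, equation \eqref{hatconfEq} yields $\hat{\nabla}_{\nu/H}\eta = \tfrac{1}{H}(\hat{\psi}\nu + T(\nu))$, and the key observation is that $\hat{g}(T(\nu),\nu)=0$ by the skew-symmetry of $T$ (already exploited in the proof of Lemma~\ref{wLaplacian}). So the first piece contributes exactly $\hat{\psi}/H$. Next I would invoke the standard formula for a normal flow with speed $F\nu$, namely $D_t \nu = -\nabla^{\Sigma} F$; for IMCF, $F = 1/H$, giving $D_t \nu = \nabla H/H^2$. Since $\nabla H$ is tangential, $\hat{g}(\eta,\nabla H) = \langle \eta, \nabla H\rangle$ and this piece contributes $\langle \eta,\nabla H\rangle/H^2$. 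Altogether,
\begin{align}
\partial_t w \;=\; \frac{\hat{\psi}}{H} \;+\; \frac{\langle \eta,\nabla H\rangle}{H^2}.
\end{align}

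Finally I would subtract $\tfrac{1}{H^2}\Delta w$ using \eqref{Lapw}:
\begin{align}
\partial_t w - \frac{1}{H^2}\Delta w &= \frac{\hat{\psi}}{H} + \frac{\langle\eta,\nabla H\rangle}{H^2} - \frac{1}{H^2}\Bigl(-|A|^2 w - \hat{Rc}(\nu,\nu)w + \langle\eta,\nabla H\rangle + \hat{\psi}H - n\nu(\hat{\psi})\Bigr).
\end{align}
The $\hat{\psi}/H$ terms and the $\langle\eta,\nabla H\rangle/H^2$ terms cancel in pairs, leaving precisely $\tfrac{|A|^2}{H^2}w + \tfrac{\hat{Rc}(\nu,\nu)}{H^2}w + n\tfrac{\nu(\hat{\psi})}{H^2}$, which is the claimed identity.

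The only non-routine point is justifying the two variational formulas. The $D_t \nu$ formula is standard for any normal flow once one writes the unit-length condition $\hat{g}(\nu,\nu)=1$ and differentiates the orthogonality relations $\hat{g}(\nu, \partial_i \varphi) = 0$; for IMCF this is well known and I would cite it rather than re-derive it. The $\hat{\nabla}_{\partial_t\varphi}\eta$ computation is immediate from \eqref{hatconfEq}. The main conceptual point worth flagging is the cancellation of the $T(\nu)$ contribution via skew-symmetry, which is what makes $\eta$ behave like a gradient conformal field for the purpose of this evolution equation even though $\eta$ is not $\hat{g}$-gradient.
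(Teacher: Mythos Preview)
Your proof is correct and follows essentially the same route as the paper: compute $\partial_t w = \hat{\psi}/H + \langle\eta,\nabla H\rangle/H^2$ from the conformal vector field equation and the standard normal-variation formula, then subtract $\tfrac{1}{H^2}\Delta w$ via Lemma~\ref{wLaplacian}. The only cosmetic difference is that the paper phrases the vanishing of the $T(\nu)$ contribution as $d\omega(\nu,\nu)=0$ (alternating property of $d\omega$) rather than skew-symmetry of $T$, which by \eqref{domegadef} is the same statement.
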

\begin{proof}
We can compute the time derivative
\begin{align}
\frac{\partial w}{\partial t} &= \langle \frac{\partial \eta}{\partial t} , \nu \rangle + \langle \eta, \frac{\partial \nu}{\partial t}  \rangle 
\\&= \frac{1}{H}\langle  \bar{\nabla}_{\nu} \eta, \nu \rangle + \frac{1}{H^2} \langle \eta, \nabla H \rangle  
\\&= \frac{\hat{\psi}}{H}+ \frac{d \omega (\nu,\nu)}{2H} + \frac{1}{H^2} \langle \eta, \nabla H \rangle
\\&= \frac{\hat{\psi}}{H}+ \frac{1}{H^2} \langle \eta, \nabla H \rangle
\end{align}
where we have used the fact that differential forms are alternating. By combining with \eqref{Lapw} we find the desired evolution equation.
\end{proof}

Now we list some important equations relating the curvature of $\bar{g}$ and $\hat{g}$. One can find equation \eqref{rotsymRicci} in \cite{S,QD} and the equation \eqref{confRiccitang} can be found in \cite{LP}. 
\begin{align}
\bar{Rc}(X,Y)&= Rc^{\Pi}(X^P,Y^P)-n \frac{\lambda''}{\lambda}\bar{g}(X,Y) + (n-1) \left (\frac{\lambda''}{\lambda} - \frac{\lambda'^2}{\lambda^2} \right ) \bar{g}(X^P,Y^P)\label{rotsymRicci}
\\\hat{Rc}(X,Y) &= \bar{Rc}(X,Y) -(n-1) \bar{\nabla}\bar{\nabla}f(X,Y) + (n-1) X(f)Y(f) 
\\&- \bar{\Delta}f\bar{g}(X,Y) -(n-1) |\bar{\nabla}f|^2 \bar{g}(X,Y)\label{confRiccitang}
\end{align}
where $X^P$ represents projection onto the tangent space of $\Pi^n$.

\section{Long Time Existence}\label{sec:LTE}

Now we are interested in trying to control the sign of the term $\nu(\psi)$, in order to gain control of the evolution equation of the support function $w$, which in our case can be expressed in the following form.
\begin{Lem}\label{DerPsiEst1} We can rewrite $\nu(\hat{\psi})$ as
\begin{align}
\nu(\hat{\psi})&=\hat{g}( \hat{\nabla}_{\eta} \hat{\nabla}f + \lambda' \hat{\nabla}f+e^{-2f}\lambda'' \partial_r   + |\hat{\nabla}f|^2 \eta, \nu )
\end{align}
\end{Lem}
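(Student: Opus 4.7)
The plan is to expand $\hat\psi = \lambda' + \eta(f)$ and differentiate directly along $\nu$, producing $\nu(\hat\psi) = \nu(\lambda') + \nu(\eta(f))$, and then to recognize each summand on the right-hand side of the claimed identity as arising from one of these two pieces.

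The radial piece is immediate. Since $\lambda' = \lambda'(r)$ depends only on the radial coordinate and $dr(\cdot) = \bar g(\partial_r, \cdot)$, I would compute $\nu(\lambda') = \lambda''\,\bar g(\partial_r,\nu) = e^{-2f}\lambda''\,\hat g(\partial_r, \nu) = \hat g(e^{-2f}\lambda''\partial_r, \nu)$, which accounts for the $e^{-2f}\lambda''\partial_r$ contribution.

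For $\nu(\eta(f))$, I would rewrite $\eta(f) = \hat g(\hat\nabla f, \eta)$ and use metric compatibility of $\hat\nabla$ to split $\nu(\eta(f)) = \hat g(\hat\nabla_\nu \hat\nabla f, \eta) + \hat g(\hat\nabla f, \hat\nabla_\nu\eta)$. The first term becomes $\hat g(\hat\nabla_\eta \hat\nabla f, \nu)$ by symmetry of the Hessian of the scalar $f$, yielding the $\hat\nabla_\eta\hat\nabla f$ summand. For the second term I would substitute \eqref{hatconfEq} together with the explicit form \eqref{SpecTExp}, giving $\hat\nabla_\nu \eta = (\lambda' + \eta(f))\nu + \nu(f)\,\eta - \hat g(\nu,\eta)\,\hat\nabla f$, and then pair with $\hat\nabla f$. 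The $(\lambda' + \eta(f))\nu$ piece supplies $\hat g(\lambda'\hat\nabla f,\nu)$ plus a cross term $\eta(f)\nu(f)$, while the $T(\nu)$ piece supplies $\nu(f)\eta(f) - \hat g(\nu,\eta)|\hat\nabla f|^2$; the $\lambda'\nu(f)$ portion is then recognized as $\hat g(\lambda'\hat\nabla f, \nu)$, giving the $\lambda'\hat\nabla f$ contribution.

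The main obstacle is the algebraic bookkeeping of the quadratic-in-$\hat\nabla f$ cross terms in this last step: one must carefully track how $\eta(f)\nu(f)$ from $\hat\psi\,\nu$ and $\nu(f)\eta(f)$ from $T(\nu)$ combine with $-\hat g(\nu,\eta)|\hat\nabla f|^2$ in order to reproduce the $|\hat\nabla f|^2\eta$ summand claimed by the lemma. Once this cleanup is settled, reassembling $\nu(\lambda')$ and $\nu(\eta(f))$ into a single pairing $\hat g(\,\cdot\,,\nu)$ yields the stated identity.
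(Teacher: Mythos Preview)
Your approach is essentially the paper's: split $\nu(\hat\psi)=\nu(\lambda')+\nu(\eta(f))$, identify $\nu(\lambda')=\hat g(e^{-2f}\lambda''\partial_r,\nu)$, and for $\nu(\eta(f))$ use Hessian symmetry together with $\hat\nabla_\nu\eta=\hat\psi\,\nu+T(\nu)$ and the explicit form \eqref{SpecTExp} of $T$. The only cosmetic difference is that the paper passes from $\hat g(\hat\nabla f,T(\nu))$ to $\hat g(T(\hat\nabla f),\nu)$ and then expands $T(\hat\nabla f)$, whereas you expand $T(\nu)$ directly; either route reduces to exactly the quadratic cross-term bookkeeping you flagged.
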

\begin{proof}
\begin{align}
\nu(\psi)&= \nu(\eta(f)) +\nu(\lambda')
\\&= \hat{\nabla}\hat{\nabla} f(\eta, \nu) +(\hat{\nabla}_\nu \eta)f+\nu(\lambda')
\\&= \hat{g}( \hat{\nabla}_{\nu} \hat{\nabla}f, \eta ) +T(\nu)(f)+ \hat{\psi}\nu(f)+\nu(\lambda')
\\&=\hat{g}( \hat{\nabla}_{\eta} \hat{\nabla}f + \hat{\psi}\hat{\nabla}f+e^{-2f}\lambda'' \partial_r, \nu ) + \hat{g}(\hat{\nabla}f,T(\nu))
\\&=\hat{g}( \hat{\nabla}_{\eta} \hat{\nabla}f + \hat{\psi}\hat{\nabla}f+e^{-2f}\lambda'' \partial_r + T(\hat{\nabla}f), \nu )
\end{align}
and we can use \eqref{SpecTExp} to find an expression for $T(\hat{\nabla}f)$ as follows
\begin{align}
\hat{g}(T(\hat{\nabla}f),\nu) &= \hat{g}((\hat{\nabla}f)(f)\eta,\nu) - \hat{g}(\hat{\nabla}f,\eta)\hat{g}(\hat{\nabla}f,\nu) = |\hat{\nabla}f|^2w - \hat{g}(\hat{\nabla}f,\eta)\hat{g}(\hat{\nabla}f,\nu)
\end{align}
which allows us to rewrite
\begin{align}
\nu(\psi)&=\hat{g}( \hat{\nabla}_{\eta} \hat{\nabla}f + \hat{\psi}\hat{\nabla}f+e^{-2f}\lambda'' \partial_r  - \hat{g}(\hat{\nabla}f,\eta)\hat{\nabla}f, \nu )+ |\hat{\nabla}f|^2w
\\&=\hat{g}( \hat{\nabla}_{\eta} \hat{\nabla}f + \lambda' \hat{\nabla}f+e^{-2f}\lambda'' \partial_r  , \nu )+ |\hat{\nabla}f|^2w
\end{align}
which is equivalent to the desired expression.
\end{proof}

Notice that $\mathcal{G}$ and $\mathcal{J}$ depend solely on $\hat{g}$ and $\bar{g}$ and so we can impose conditions on these metrics so that $\mathcal{G}$ and $\mathcal{J}$ have desired properties which can be used to show long time existence. The idea is to control the angle, $\theta_1$, between $\mathcal{G}$ (or $\mathcal{J}$) and $\eta$ as well as control the angle, $\theta_2$, between $\nu$ and $\eta$ so that we will be able to deduce control on the angle between $\nu$ and $\mathcal{G}$ (or $\mathcal{J}$).  

\begin{Lem}\label{strongstar}
Choose $f$ and $\lambda$ so that $\exists \theta_1, \theta_2$, $0 \le \theta_1,\theta_2 < \pi/2$, $\theta_1+\theta_2 < \pi/2$, and for all $(x,t) \in \Sigma\times [0,T)$ and all $V \in T_xM$ where $x \in \Sigma_t$ and $g(V,\eta) \ge \cos(\theta_1) |\eta|$ we have that 
\begin{align}
\mathcal{J}(f,\lambda,V):&=\hat{\nabla}_{\eta} \hat{\nabla}f +e^{-2f}\lambda'' \partial_r  +\lambda'\hat{\nabla}f + (|\hat{\nabla}f|^2+\frac{1}{n}) \eta + \frac{1}{n}\hat{Rc}(V,V) \eta
\\\hat{g}(\mathcal{J}(f,\lambda,V), \eta ) &\ge \cos(\theta_2)|\mathcal{J}(f,\lambda,V)| |\eta|
\end{align}
If we assume that when $t = 0$ we have $g(\nu,\eta) > \cos(\theta_1) |\eta|$ then
\begin{align}
w(x,t) > \cos(\theta_1)\left(\min_{\Sigma_0}|\eta|\right) \text{ for } t \in [0,T)
\end{align}
\end{Lem}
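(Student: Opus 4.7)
The plan is to apply the parabolic maximum principle to the support function $w=\hat{g}(\eta,\nu)$ via the evolution equation of Lemma \ref{wevolution}. First I would substitute the expression for $\nu(\hat{\psi})$ from Lemma \ref{DerPsiEst1} into
\begin{align*}
\left(\partial_t - \tfrac{1}{H^2}\Delta\right)w \;=\; \tfrac{|A|^2}{H^2}\,w + \tfrac{\hat{Rc}(\nu,\nu)}{H^2}\,w + \tfrac{n\,\nu(\hat{\psi})}{H^2},
\end{align*}
and invoke the pointwise bound $|A|^2 \ge H^2/n$, which produces a lower bound $\tfrac{w}{n}$ for $\tfrac{|A|^2}{H^2}w$ wherever $w>0$. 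After regrouping, the right-hand side is bounded below by $\tfrac{n}{H^2}\,\hat{g}(\mathcal{J}(f,\lambda,\nu),\nu)$; the summands $\tfrac{1}{n}\eta$ and $\tfrac{1}{n}\hat{Rc}(V,V)\eta$ built into the definition of $\mathcal{J}$ are precisely what absorb the $|A|^2\!/H^2\ge 1/n$ bound and the Ricci contribution in the evolution equation.

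Next, set $c := \cos(\theta_1)\min_{\Sigma_0}|\eta|$. The initial strong star-shapedness gives $w > \cos(\theta_1)|\eta| \ge c$ on $\Sigma_0$. I would argue by contradiction: let $t_0 := \inf\{t \in (0,T) : \min_{\Sigma_t} w \le c\}$. By continuity $t_0 > 0$ and some $x_0 \in \Sigma_{t_0}$ realizes $w(x_0,t_0) = c$. At this first-failure point, $x_0$ is both a spatial minimum and a time-minimum of $w$, so $\nabla w(x_0,t_0)=0$, $\Delta w(x_0,t_0)\ge 0$, and, by a Hamilton-style argument on the infimum of a parabolic quantity, $\partial_t w(x_0,t_0)\le 0$, giving $\left(\partial_t - \tfrac{1}{H^2}\Delta\right)w \le 0$ at $(x_0,t_0)$.

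The third step is the geometric angle argument. At the first-failure point $\nu(x_0,t_0)$ still satisfies $\hat{g}(\nu,\eta)\ge\cos(\theta_1)|\eta|$, so the hypothesis on $\mathcal{J}$ applies with $V=\nu$. The bound $\hat{g}(\mathcal{J}(f,\lambda,\nu),\eta) \ge \cos(\theta_2)|\mathcal{J}||\eta|$ means $\angle(\mathcal{J},\eta)\le \theta_2$, and $\angle(\nu,\eta)\le\theta_1$. Because $\theta_1+\theta_2 < \pi/2$, the angle between $\mathcal{J}(f,\lambda,\nu)$ and $\nu$ is strictly less than $\pi/2$, so $\hat{g}(\mathcal{J}(f,\lambda,\nu),\nu) > 0$. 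This makes the right-hand side of the inequality from step 1 strictly positive at $(x_0,t_0)$, directly contradicting step 2.

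The main obstacle I anticipate is justifying that strong star-shapedness $\hat{g}(\nu,\eta)\ge\cos(\theta_1)|\eta|$ is indeed still in force at the first-failure point $(x_0,t_0)$, so that the $\mathcal{J}$ hypothesis can legitimately be invoked with $V=\nu$; this amounts to verifying $|\eta(x_0,t_0)|\le\min_{\Sigma_0}|\eta|$, which is not immediate and should be handled using the definition of $t_0$ together with the monotonicity behavior of $|\eta|$ along the flow implied by $\lambda' \ge 0$ (an ambient hypothesis from Theorem \ref{LTE}). A secondary bookkeeping task is the coefficient-matching in step 1, ensuring that the two $\tfrac{1}{n}$-weighted $\eta$-terms in $\mathcal{J}$ exactly encode the $|A|^2 \ge H^2/n$ bound and the rescaling of the Ricci term from $1/H^2$ to $1/n$.
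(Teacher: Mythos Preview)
Your overall architecture---substitute Lemma~\ref{DerPsiEst1} into the evolution equation of Lemma~\ref{wevolution}, invoke $|A|^2\ge H^2/n$, regroup into $\tfrac{n}{H^2}\hat g(\mathcal{J}(f,\lambda,\nu),\nu)$, and run a first-failure contradiction with the angle argument---is exactly what the paper does. The gap is in the choice of barrier.

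You track the \emph{constant} $c=\cos(\theta_1)\min_{\Sigma_0}|\eta|$ and look for the first time $w$ touches $c$. You then correctly flag that to invoke the $\mathcal{J}$-hypothesis with $V=\nu$ at $(x_0,t_0)$ you need $\hat g(\nu,\eta)\ge\cos(\theta_1)|\eta|$ there, which (since $w(x_0,t_0)=c$) forces $|\eta(x_0,t_0)|\le\min_{\Sigma_0}|\eta|$. Your proposed fix, ``monotonicity of $|\eta|$ along the flow implied by $\lambda'\ge 0$,'' points the wrong way: from the computation in Lemma~\ref{Uppereta}, $\partial_t|\eta|_{\bar g}^2=\tfrac{2\lambda' e^{-f}}{H}w\ge 0$ when $\lambda'\ge 0$, so $|\eta|$ is \emph{nondecreasing} along the flow and in general $|\eta(x_0,t_0)|\ge\min_{\Sigma_0}|\eta|$. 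The inequality you need simply fails, and the $\mathcal{J}$-hypothesis cannot be invoked at your first-failure point.

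The paper avoids this by tracking the \emph{angle} rather than a constant: it supposes $(\bar x,\bar t)$ is the first spacetime point where $w=\cos(\theta_1)|\eta|$ (equivalently, where the angle between $\nu$ and $\eta$ reaches $\theta_1$). At that point $\hat g(\nu,\eta)=\cos(\theta_1)|\eta|$ holds \emph{by construction}, so the $\mathcal{J}$-hypothesis applies immediately and the angle argument gives $\hat g(\mathcal{J},\nu)>0$, contradicting the parabolic first-touch inequalities. The stated conclusion $w>\cos(\theta_1)\min_{\Sigma_0}|\eta|$ then follows from the preserved angle bound together with $|\eta|\ge\min_{\Sigma_0}|\eta|$ (now the monotonicity goes the right way). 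So the fix is not to bound $|\eta|$ but to change the barrier from the constant $c$ to the moving quantity $\cos(\theta_1)|\eta|$.
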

\begin{proof}
From the derivation above we have that
\begin{align}
\left (\partial_t - \frac{1}{H^2} \Delta \right ) w & = \frac{|A|^2}{H^2} w + \frac{\hat{Rc}(\nu,\nu)}{H^2}w  +n \frac{ \nu ( \psi )}{H^2} 
\\&\ge n\frac{\hat{g}(\hat{\nabla}_{\eta} \hat{\nabla}f +e^{-2f}\lambda'' \partial_r  +\lambda'\hat{\nabla}f + (|\hat{\nabla}f|^2+\frac{1}{n}) \eta + \frac{1}{n}\hat{Rc}(\nu,\nu) \eta, \nu )}{H^2}
\\&=n\frac{\hat{g}(\mathcal{J}(f,\lambda,\nu),\nu)}{H^2}
\end{align}
For sake of contradiction assume that $w(\bar{x},\bar{t}) = \cos(\theta_1)|\eta|_{(\bar{x},\bar{t})}$ for the first time over $\Sigma\times [0,T)$ at the point $(\bar{x},\bar{t})$ and so
\begin{align}
\frac{\partial w}{\partial t}|_{(\bar{x},\bar{t})} &\le 0
\hspace{1cm} \Delta w|_{(\bar{x},\bar{t})} \ge 0
\hspace{1cm} \left (\partial_t - \frac{1}{H^2} \Delta \right ) w|_{(\bar{x},\bar{t})}  \le 0
\end{align}
Now by assumption we know that $\hat{g}(\mathcal{J}(f,\lambda,\nu),\eta)|_{(\bar{x},\bar{t})} \ge \cos(\theta_2) |\mathcal{J}(f,\lambda,\nu)||\eta|_{(\bar{x},\bar{t})}$ for $\hat{g}(\nu,\eta)|_{(\bar{x},\bar{t})} = \cos(\theta_1)|\eta||_{(\bar{x},\bar{t})}$ and since we have assumed that $\theta_1 + \theta_2 < \frac{\pi}{2}$ we know $\hat{g}(\mathcal{J}(f,\lambda,\nu),\nu)|_{(\bar{x},\bar{t})} > 0$ which implies
\begin{align}
\left (\partial_t - \frac{1}{H^2} \Delta \right ) w|_{(\bar{x},\bar{t})}  > 0
\end{align}
which is a contradiction and hence the lemma holds.
\end{proof}
\textbf{Note:} Lemma \eqref{strongstar} is important because it says that if your inital hypersurface $\Sigma_0$ is strongly star-shaped, i.e. $g(\nu,\eta) > \cos(\theta_1)|\eta|$ for some $\theta_1 < \pi/2$, then it remains strongly star-shaped. This will implicitly be used below.

\begin{Lem}\label{Upperu}
If we define $u = \frac{1}{Hw}$ and choose $f$ and $\lambda$ so that $\exists \theta_1, \theta_2$,$0 \le \theta_1,\theta_2 \le \pi/2$, $\theta_1+\theta_2 \le \pi/2$, and for all $(x,t) \in \Sigma\times [0,T)$ and all $V \in T_xM$ where $x \in \Sigma_t$ and $g(V,\eta) \ge \cos(\theta_1) |\eta|$ we have that 
\begin{align}
\mathcal{J}(f,\lambda,V):&=\hat{\nabla}_{\eta} \hat{\nabla}f +e^{-2f}\lambda'' \partial_r  +\lambda'\hat{\nabla}f + (|\hat{\nabla}f|^2+\frac{1}{n}) \eta + \frac{1}{n}\hat{Rc}(V,V) \eta
\\\hat{g}(\mathcal{J}(f,\lambda,V), \eta ) &\ge \cos(\theta_2)|\mathcal{J}(f,\lambda,V)| |\eta|
\end{align}
as well as
 \begin{align}
\mathcal{G}(f,\lambda):=\hat{\nabla}_{\eta} \hat{\nabla}f +e^{-2f}\lambda'' \partial_r  +\lambda'\hat{\nabla}f + |\hat{\nabla}f|^2 \eta
\\ \hat{g}(\mathcal{G}(f,\lambda), \eta ) \ge \cos(\theta_2)|\mathcal{G}(f,\lambda)| |\eta|
\end{align}
 and $\hat{g}(\nu,\eta) > \cos(\theta_1) |\eta|$ on $\Sigma_0$ then
\begin{align}
u(x,t) \le \max_{\Sigma_0}u
\end{align}
\end{Lem}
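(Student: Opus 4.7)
The plan is to derive a linear parabolic inequality for $u=\frac{1}{Hw}$ of the form $Lu \le B\cdot\nabla u$, where $L=\partial_t-\frac{1}{H^2}\Delta$, and then invoke the weak parabolic maximum principle. Because the hypotheses on $\mathcal{J}$ match those of Lemma \ref{strongstar}, the flow stays strongly star-shaped, so $w>0$ on $[0,T)$; since IMCF is defined only while $H>0$, the quantity $u$ is smooth and positive on $\Sigma \times [0,T)$. Observe that $\nu(\hat{\psi})$ is exactly the quantity $\hat{g}(\mathcal{G}(f,\lambda),\nu)$ by Lemma \ref{DerPsiEst1}, which is the piece the $\mathcal{G}$-hypothesis is designed to control.

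For the core computation I would pair Lemma \ref{wevolution} with the standard IMCF evolution
\begin{align}
LH=-\frac{2|\nabla H|^2}{H^3}-\frac{|A|^2}{H}-\frac{\hat{Rc}(\nu,\nu)}{H},
\end{align}
and apply the product identity $L(Hw)=(LH)w+H(Lw)-\frac{2}{H^2}\nabla H\cdot\nabla w$. The $|A|^2$ and $\hat{Rc}(\nu,\nu)$ contributions cancel exactly, yielding
\begin{align}
L(Hw)=-\frac{2w|\nabla H|^2}{H^3}+\frac{n\,\nu(\hat{\psi})}{H}-\frac{2}{H^2}\nabla H\cdot\nabla w.
\end{align}
Next, with $v=Hw$ I would use $L(1/v)=-Lv/v^2-2|\nabla v|^2/(H^2 v^3)$, expand $|\nabla(Hw)|^2$, and eliminate $\nabla H$ via the identity $\nabla H=-(H/w)\nabla w-H^2 w\,\nabla u$ obtained from $\nabla u=-(Hw)^{-2}\nabla(Hw)$. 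After these substitutions the quadratic terms in $\nabla H$ and $\nabla w$ cancel, leaving
\begin{align}
Lu=-\frac{n\,\nu(\hat{\psi})}{H^3 w^2}+\frac{2}{H^2 w}\nabla w\cdot\nabla u.
\end{align}

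To finish, I combine Lemma \ref{DerPsiEst1}, which gives $\nu(\hat{\psi})=\hat{g}(\mathcal{G}(f,\lambda),\nu)$, with two angle bounds: the strong star-shapedness preserved by Lemma \ref{strongstar} gives $\hat{g}(\nu,\eta)>\cos(\theta_1)|\eta|$, and the $\mathcal{G}$-hypothesis gives $\hat{g}(\mathcal{G},\eta)\ge\cos(\theta_2)|\mathcal{G}||\eta|$. Since $\theta_1+\theta_2\le\pi/2$, the angle between $\nu$ and $\mathcal{G}$ is at most $\pi/2$, so $\nu(\hat{\psi})\ge 0$. Therefore
\begin{align}
\partial_t u-\frac{1}{H^2}\Delta u-\frac{2}{H^2 w}\nabla w\cdot\nabla u\le 0,
\end{align}
and the weak parabolic maximum principle delivers $u(x,t)\le\max_{\Sigma_0}u$ for every $t\in[0,T)$.

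The main obstacle I anticipate is the algebraic bookkeeping in the passage from $L(Hw)$ to $Lu$: one must combine the quadratic gradient terms produced by $L(1/v)$ with the cross term $\nabla H\cdot\nabla w$ and use the constraint $\nabla(Hw)=-(Hw)^2\nabla u$ to repackage everything as a single first-order drift in $\nabla u$, with no residual term of indefinite sign; any remaining curvature or second fundamental form contributions must cancel for the maximum principle to apply cleanly.
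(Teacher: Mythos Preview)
Your proposal is correct and follows essentially the same approach as the paper: derive the evolution equation for $u$ by combining the standard $H$-evolution with Lemma \ref{wevolution}, use the angle hypotheses (together with Lemma \ref{strongstar} to keep $\hat g(\nu,\eta)>\cos(\theta_1)|\eta|$) to force $\nu(\hat{\psi})=\hat g(\mathcal{G},\nu)\ge 0$, and conclude via the maximum principle. Your final identity $Lu=-\frac{n\,\nu(\hat{\psi})}{H^3 w^2}+\frac{2}{H^2 w}\nabla w\cdot\nabla u$ is exactly the paper's formula $Lu=-\frac{2w}{H}|\nabla u|^2-\frac{2}{H^3}\langle\nabla H,\nabla u\rangle-\frac{n\,\nu(\hat{\psi})}{H^3 w^2}$ after the substitution $\nabla H=-H^2 w\,\nabla u-\frac{H}{w}\nabla w$, so no residual term is missing.
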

\begin{proof}
By combining the standard evolution equation for $H$ with the evolution of $w$, given above, we find the evolution equation for $u$.
\begin{align}
\left (\partial_t - \frac{1}{H^2} \Delta \right )H &= - 2 \frac{|\nabla H|^2}{H^3} - \frac{|A|^2}{H} - \frac{\hat{Rc}(\nu,\nu)}{H}
\\ \left (\partial_t - \frac{1}{H^2} \Delta \right )u & = -\frac{ 2  w}{H} |\nabla u |^2 - \frac{2 }{H^3} \langle \nabla H, \nabla u \rangle -n \frac{ \nu ( \hat{\psi} )}{H^3w^2}
\end{align}
So now we relate the assumptions made in the statement of this lemma to the term $\nu(\hat{\psi})$. Let $\theta_3$ be the largest angle between $\nu$ and $\mathcal{G}(f,\lambda)$ over $\Sigma\times [0,T)$, i.e.
\begin{align}
\nu (\hat{\psi}) = \hat{g}(\mathcal{G}(f,\lambda), \nu ) \ge \cos(\theta_3)|\mathcal{G}(f,\lambda)|
\end{align}
 Now we use that by the assumptions we have $ \hat{g}(\mathcal{G}(f,\lambda), \eta ) \ge \cos(\theta_2)|\mathcal{G}(f,\lambda)| |\eta|$ and $\hat{g}(\nu,\eta) > \cos(\theta_1) |\eta|$ it follows that  $\theta_3 < \theta_1 + \theta_2 \le \frac{\pi}{2}$ and hence $\nu(\psi) \ge 0$ which implies
\begin{align}
\left (\partial_t - \frac{1}{H^2} \Delta \right )u \le -\frac{ 2  w}{H} |\nabla u |^2 - \frac{2 }{H^3} \langle \nabla H, \nabla u \rangle
\end{align}
from which the lemma follows by the maximum principle.
\end{proof}

Now in order to extract, from Lemma \ref{Upperu}, useful information about the lower bound of $H$ we need an upper bound on the norm of $\eta$ along $\Sigma_t$ which is what we proceed to obtain next.

\begin{Lem}\label{Uppereta}
Let $\Sigma_0$ be star-shaped and $\Sigma_t$ the corresponding solution to IMCF in $(M,\hat{g})$. If we assume that $\lambda ' > 0$ and
\begin{align}
0<\delta_1 \le \frac{e^f \lambda '}{\lambda' + \eta(f)} \le \delta_2 < \infty
\end{align}
on $M$, then we find
\begin{align}
\left (\min_{\Sigma_0} |\eta|_{\bar{g}} \right ) e^{\delta_1 t/n} \le |\eta|_{\bar{g}} \le \left (\max_{\Sigma_0} |\eta|_{\bar{g}} \right ) e^{\delta_2 t/n} 
\end{align}
and hence $w \le e^f \displaystyle\left(\max_{\Sigma_0} |\eta|_{\bar{g}} \right ) e^{\delta_2 t/n}$.
\end{Lem}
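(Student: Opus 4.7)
The plan is to apply Hamilton's maximum-principle trick to the scalar $\lambda = |\eta|_{\bar g}$ viewed as a function on $\Sigma_t$, deriving an ODE at the spatial extremum that integrates to the claimed exponential bound.

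First, I would compute the time derivative of $\lambda$ along the flow. Since the IMCF velocity is $\nu/H$, using $d\lambda = \lambda'\,dr$ together with $\hat{\nabla} r = e^{-2f}\partial_r$ and $\hat g(\partial_r,\nu) = w/\lambda$ (the latter coming from $\eta = \lambda\,\partial_r$), a short computation gives
\begin{align}
\partial_t\lambda \;=\; \frac{\lambda'}{H}\,\nu(r) \;=\; \frac{\lambda'\,e^{-2f}\,w}{H\,\lambda}. \nonumber
\end{align}

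Next I would apply Hamilton's trick at a spatial maximum $x^\ast$ of $\lambda$ on $\Sigma_t$. Since $\lambda'>0$, $\nabla^{\Sigma}\lambda(x^\ast)=0$ forces $\nabla^{\Sigma} r(x^\ast)=0$, so $\partial_r$ is $\hat g$-normal to $\Sigma_t$ at $x^\ast$; star-shapedness fixes the outward choice, giving $\nu(x^\ast)=e^{-f}\partial_r$ and consequently $w(x^\ast)=\lambda\,e^f$. The crucial step is then to bound $H(x^\ast)$ from below by a level-set comparison. Using the standard identity
\begin{align}
\operatorname{Hess}^{\Sigma} r(X,Y) \;=\; \hat{\nabla}^2 r(X,Y) \;-\; A(X,Y)\,\nu(r) \nonumber
\end{align}
together with $\operatorname{Hess}^{\Sigma} r|_{x^\ast}\le 0$ (since $r$ attains a maximum on $\Sigma_t$ at $x^\ast$), and the corresponding identity for the level set $\{r = r(x^\ast)\}$ (which reduces to $\hat{\nabla}^2 r(X,Y) = e^{-f} A_{\mathrm{level}}(X,Y)$ on its tangent space because $\Delta^{\mathrm{level}} r \equiv 0$), one obtains $A \ge A_{\mathrm{level}}$ as bilinear forms at $x^\ast$; tracing and invoking the warped-product computation of the level-set mean curvature yields
\begin{align}
H(x^\ast) \;\ge\; H_{\mathrm{level}}(x^\ast) \;=\; \frac{n\,e^{-f}\,\hat\psi}{\lambda}. \nonumber
\end{align}

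Substituting back, $\partial_t\lambda|_{x^\ast} = \lambda'\,e^{-f}/H \le \lambda\lambda'/(n\hat\psi)$, and the hypothesis $e^f\lambda'/\hat\psi\le\delta_2$ then gives $\frac{d}{dt}\max_{\Sigma_t}\lambda \le \frac{\delta_2}{n}\max_{\Sigma_t}\lambda$, which integrates to $|\eta|_{\bar g}\le(\max_{\Sigma_0}|\eta|_{\bar g})\,e^{\delta_2 t/n}$. The lower bound follows by the mirror argument at a spatial minimum of $\lambda$: there $\operatorname{Hess}^{\Sigma} r\ge 0$ reverses the comparison to $H\le H_{\mathrm{level}}$, and $e^f\lambda'/\hat\psi\ge\delta_1$ supplies the matching lower ODE. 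Finally, the claim on $w$ is immediate from Cauchy--Schwarz: $w \le |\eta|_{\hat g} = e^f\lambda$, so the upper bound on $|\eta|_{\bar g}$ yields the stated bound on $w$.

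The main obstacle will be the convexity comparison at the extremum. One has to track the conformal factor $e^f$ carefully when relating the $\hat g$-Hessian of $r$ to the level-set second fundamental form and then to its mean curvature, and verify that the resulting lower bound on $H$ meshes with the hypothesis $e^f\lambda'/\hat\psi\in[\delta_1,\delta_2]$ to produce exactly the exponents $\delta_1/n$ and $\delta_2/n$ in the final ODE comparison.
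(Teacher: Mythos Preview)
Your proposal is correct and follows essentially the same route as the paper: apply Hamilton's trick to $|\eta|_{\bar g}=\lambda$, show that at the spatial extremum the normal is radial and $H$ dominates (resp.\ is dominated by) the level-set mean curvature $n\hat\psi/|\eta|_{\hat g}$, and integrate the resulting differential inequality. The only cosmetic difference is that the paper establishes $H(x^\ast)\ge H_{\mathrm{level}}(x^\ast)$ by writing $\Sigma_t$ explicitly as a graph over the level set and computing the graph mean curvature, while you reach the same comparison via the intrinsic identity $\mathrm{Hess}^{\Sigma}r=\hat\nabla^2 r - A\,\nu(r)$ at the touching point; these are equivalent second-order arguments.
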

\begin{proof}

For this we choose a point $x \in \Sigma_t$ so that $|\eta|_{\bar{g}}(x)$ is the maximum of $|\eta|_{\bar{g}}$ over $\Sigma_t$. Then we know that $\Delta |\eta|_{\bar{g}}^2 \le 0$ and we can find the following

\begin{align}
\frac{\partial |\eta|_{\bar{g}}^2}{\partial t} = 2 \bar{g}\left( \frac{\partial \eta}{\partial t}, \eta \right ) &= \frac{2}{H} \bar{g}\left( \bar{\nabla}_{\bar{\nu}}\eta, \eta \right ) = \frac{2}{H}  \bar{g}\left( \lambda' \bar{\nu}, \eta \right )  = \frac{2\lambda' e^{-f}}{H}  w 
\end{align}
where we note that $\bar{\nu} = e^f \nu$.

Now we calculate

\begin{align}
\frac{\eta}{|\eta|_{\bar{g}}}\left (|\eta|_{\bar{g}} \right ) &= |\eta|_{\bar{g}}^{-1} \bar{g}(\bar{\nabla}_{\frac{\eta}{|\eta|_{\bar{g}}}} \eta,\eta) = \lambda' \bar{g}\left( \frac{\eta}{|\eta|_{\bar{g}}}, \frac{\eta}{|\eta|_{\bar{g}}}\right)  = \lambda'
\end{align}
and since by assumption $\lambda' >0$ we have that $\frac{\eta}{|\eta|_{\bar{g}}}\left (|\eta|_{\bar{g}} \right ) > 0$.

Now if we let $\bar{x} \in \Sigma_t$ be the point of maximum for $|\eta|_{\bar{g}}^2$ at time $t$, and hence $|\eta|_{\bar{g}}$, then we have that $e_i (|\eta|_{\bar{g}}) = 0$ for $e_i$ a basis of $T_{\bar{x}}\Sigma_t$ and so we must have that $\frac{\eta}{|\eta|_{\bar{g}}}=\nu$ at $(\bar{x},t)$ in order to reconcile all of the directional derivatives of $|\eta|_{\bar{g}}$.

Let $\Pi= \Pi_{|\eta|_{\bar{g}}(\bar{x})}$ be the level set of $|\eta|_{\bar{g}}^2$ so that $\bar{x} \in \Pi$. Then by the fact that $\bar{x}$ is where the maximum of $|\eta|^2$ occurs we have that $T_{\bar{x}}\Pi = T_{\bar{x}}\Sigma_t$ and $\Sigma_t \subset \Pi$, i.e. contained on the inside, and so by writing $\Sigma_t$ locally as a graph over $\Pi$ we claim we can find that $H(\bar{x}) \ge \tilde{H}(\bar{x})$ where $\tilde{H}$ is the mean curvature of $\Pi$, which we now proceed to show. 

Let $F: \Pi \rightarrow \R$ so that $\Sigma = (F(\Pi),\Pi)$ and define the gradient of $F$ with respect to $\Pi_{|\eta|_{\bar{g}}}$ as $\tilde{\nabla}F = \sigma^{ij} F_j \partial_i$, where $\partial_i$ is a coordinate basis for $\Pi$ and $F_j$ is the coordinate derivative of $F$. Then we can define $\nu = \frac{1}{\sqrt{1+|\tilde{\nabla}F|^2}}\lp-\tilde{\nabla}F + \frac{\eta}{|\eta|_{\hat{g}}}\rp$ (See \cite{CG2} section 1.5 for more on graphs in Riemannian manifolds). Now using the convention where summation over repeated indices is implied and letting $\tilde{H}$ be the mean curvature of $\Pi$ with respect to $\hat{g}$ we can compute 
\begin{align}
H &= \hat{div}(\nu)= \hat{div}(\frac{-\tilde{\nabla}F}{\sqrt{1+|\tilde{\nabla}F|^2}}) + \frac{1}{\sqrt{1+|\tilde{\nabla}F|^2}}\hat{div}(\frac{\eta}{|\eta|_{\hat{g}}})
\\&= \frac{-\hat{g}(\hat{\nabla}_{e_i}\tilde{\nabla}F,e_i)}{\sqrt{1+|\tilde{\nabla}F|^2}} +\frac{\hat{g}(\hat{\nabla}_{e_i}(|\tilde{\nabla}F|^2),\tilde{\nabla}_{e_i}F)}{(1+|\tilde{\nabla}F|^2)^{3/2}}+ \frac{\tilde{H}}{\sqrt{1+|\tilde{\nabla}F|^2}}
\\&= \frac{-\hat{g}(\tilde{\nabla}_{e_i}\tilde{\nabla}F,e_i)}{\sqrt{1+|\tilde{\nabla}F|^2}}-\frac{A_{\Pi_{|\eta|}}(e_i,\tilde{\nabla}F)\hat{g}(\nu,e_i)}{\sqrt{1+|\tilde{\nabla}F|^2}}+\frac{\hat{g}(\tilde{\nabla}_{e_i}(|\tilde{\nabla}F|^2),\tilde{\nabla}_{e_i}F)}{(1+|\tilde{\nabla}F|^2)^{3/2}} + \frac{\tilde{H}}{\sqrt{1+|\tilde{\nabla}F|^2}}
\\&= \frac{-\tilde{Hess}F(e_i,e_i)}{\sqrt{1+|\tilde{\nabla}F|^2}}+\frac{\tilde{Hess}F(\frac{\tilde{\nabla}F}{\sqrt{1+|\tilde{\nabla}F|^2}},\frac{\tilde{\nabla}F}{\sqrt{1+|\tilde{\nabla}F|^2}})}{\sqrt{1+|\tilde{\nabla}F|^2}} +\frac{\tilde{H}}{\sqrt{1+|\tilde{\nabla}F|^2}}
\end{align}
where $A_{\Pi_{|\eta|}}$ is the second fundamental form of $\Pi_{|\eta|}$ and $\tilde{Hess}F$ is the Hessian of $F$ with respect to $\Pi_{|\eta|}$. So if we assume that $\{e_1,...,e_n\}$ diagonalizes the hessian of $F$ at the point $\bar{x}$ then we compute
 \begin{align}
 H &= \frac{1}{\sqrt{1+|\tilde{\nabla}F|^2}}\lp-\tilde{Hess}F(e_i,e_i)+\tilde{Hess}F(\frac{\tilde{\nabla}F}{\sqrt{1+|\tilde{\nabla}F|^2}},\frac{\tilde{\nabla}F}{\sqrt{1+|\tilde{\nabla}F|^2}}) +\tilde{H}\rp
 \\&= \frac{1}{\sqrt{1+|\tilde{\nabla}F|^2}}\lp-\tilde{Hess}F(e_i,e_i)+\tilde{Hess}F(e_i,e_i)\hat{g}\lp\frac{\tilde{\nabla}F}{\sqrt{1+|\tilde{\nabla}F|^2}},e_i\rp^2 +\tilde{H}\rp
 \\&= \frac{1}{\sqrt{1+|\tilde{\nabla}F|^2}}\lp\tilde{Hess}F(e_i,e_i)\lp\hat{g}\lp\frac{\tilde{\nabla}F}{\sqrt{1+|\tilde{\nabla}F|^2}},e_i\rp^2 - 1\rp +\tilde{H}\rp
 \end{align}
from which we deduce that $H(\bar{x}) \ge \tilde{H}(\bar{x})$ since $\hat{g}\lp\frac{\tilde{\nabla}F}{\sqrt{1+|\tilde{\nabla}F|^2}},e_i\rp^2 - 1 \le 0$ and  $\tilde{Hess}F(\bar{x})(e_i,e_i) \le 0$ at the point $\bar{x}$.

 Now we compute $\tilde{H}(\bar{x})$ using the basis $\{e_0=\frac{\eta}{|\eta|}, e_1,...,e_n\}$ where we are using the convention that summation over repeated indices is implied

\begin{align}
\tilde{H}(\bar{x})= \hat{\text{div}}\left ( \frac{\eta}{|\eta|_{\hat{g}}} \right) &= \hat{g}\left(\hat{\nabla}_{e_i}\left (\frac{\eta}{|\eta|_{\hat{g}}} \right),e_i\right )
\\&= \frac{1}{|\eta|_{\hat{g}}} \hat{g}(\hat{\psi} e_i,e_i)+\frac{1}{|\eta|_{\hat{g}}}\hat{g}(T(e_i),e_i) -\frac{\hat{g}(\eta,e_i)}{|\eta|_{\hat{g}}^3} \hat{g}(\hat{\nabla}_{e_i}\eta,\eta)
\\&=\frac{(n+1)\hat{\psi}}{|\eta|_{\hat{g}}} - \frac{\hat{\psi} }{|\eta|_{\hat{g}}^3} \hat{g}( e_i,\eta)g( e_i,\eta) - \frac{1}{|\eta|_{\hat{g}}^3} \hat{g}( e_i,\eta)g( T(e_i),\eta)= \frac{n\hat{\psi}}{|\eta|_{\hat{g}}}
\end{align}

Now we can apply these facts to find the following at the point $\bar{x}$

\begin{align}
\left(\frac{2}{H} e^{-f}\lambda' w  \right )|_{\bar{x}}\le \frac{2\lambda'|\eta|_{\bar{g}} }{\frac{n \hat{\psi}}{|\eta|_{\hat{g}}}}|_{\bar{x}} = \frac{2|\eta|_{\bar{g}}^2}{n} \frac{e^f \lambda' }{\lambda' + \eta(f)}|_{\bar{x}} \le  \frac{2 \delta_2 |\eta|_{\bar{g}}^2}{n}|_{\bar{x}}
\end{align}

which leads to the equation for the evolution of $|\eta|^2$ at points of maximums over $\Sigma_t$

\begin{align}
\frac{\partial }{\partial t} \max_{\Sigma_t}|\eta|_{\bar{g}}^2 \le \frac{2 \delta_2}{n} \max_{\Sigma_t}|\eta|_{\bar{g}}^2
\end{align}

and so by Hamilton's maximum principle we find $|\eta|_{\bar{g}} \le \displaystyle \left  (\max_{\Sigma_0} |\eta|_{\bar{g}} \right ) e^{\delta_2 t/n}$ and hence $w \le \displaystyle \left  (\max_{\Sigma_0} |\eta|_{\bar{g}} \right ) e^{\delta_2 t/n}$. The lower bound follows similarly.
\end{proof}

Now we can combine Lemma \ref{Uppereta} with Lemma \ref{Upperu} in order to find a lower bound on $H$. An upper bound on $H$ is also obtained solely with the lower bound on $Rc$.

\begin{Cor}\label{FirstHEstimates}
Choose $f$ and $\lambda$ so that $\exists \theta_1, \theta_2$ where $0 \le \theta_1,\theta_2 \le \pi/2$, $\theta_1+\theta_2 \le \pi/2$ and for all $(x,t) \in \Sigma\times [0,T)$ and every $V \in T_xM$ where $x \in \Sigma_t$ and $g(V,\eta) \ge \cos(\theta_1) |\eta|$ we have that 
\begin{align}
\mathcal{J}(f,\lambda,V):&=\hat{\nabla}_{\eta} \hat{\nabla}f +e^{-2f}\lambda'' \partial_r  +\lambda'\hat{\nabla}f + (|\hat{\nabla}f|^2+\frac{1}{n}) \eta + \frac{1}{n}\hat{Rc}(V,V) \eta
\\\hat{g}(\mathcal{J}(f,\lambda,V), \eta ) &\ge \cos(\theta_2)|\mathcal{J}(f,\lambda,V)| |\eta|
\end{align}
  as well as
 \begin{align}
\mathcal{G}(f,\lambda):=\hat{\nabla}_{\eta} \hat{\nabla}f +e^{-2f}\lambda'' \partial_r  +\lambda' \hat{\nabla}f + |\hat{\nabla}f|^2 \eta
\\ \hat{g}(\mathcal{G}(f,\lambda), \eta ) \ge \cos(\theta_2)|\mathcal{G}(f,\lambda)| |\eta|
\end{align}
, $\hat{g}(\nu,\eta) > \cos(\theta_1) |\eta|$ on $\Sigma_0$, $\lambda ' > 0$ and
\begin{align}
0<\delta_1 \le \frac{e^f\lambda '}{\lambda' + \eta(f)} \le \delta_2 < \infty
\end{align}
on $M$, then
\begin{align}
H(t)\ge e^{-f}\left(\min_{\Sigma_0}H \right )\left(\min_{\Sigma_0}w \right )\left(\max_{\Sigma_0}w \right )^{-1}e^{-\delta_2 t/n}
\end{align}
If we further assume $Rc(V,V) \ge -C$ for all $V \in T_pM$, $p \in \Sigma_t$, $t \in [0,T]$ and $\hat{g}(V,\eta) \ge \cos(\theta_1)|V||\eta|$ then we find
\begin{align}
H(x,t) \le \max \left ( \max_{\Sigma_t}H, Cn\right ).
\end{align}
\end{Cor}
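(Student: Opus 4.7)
The plan is to handle the lower and upper bounds on $H$ separately, since they use genuinely different inputs: the lower bound is a direct consequence of the two previous maximum-principle lemmas (Lemma \ref{Upperu} and Lemma \ref{Uppereta}), whereas the upper bound uses only the standard evolution equation for $H$ together with the new Ricci hypothesis.

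For the lower bound, I would first apply Lemma \ref{Upperu} to the auxiliary quantity $u = 1/(Hw)$, which gives $u(x,t) \le \max_{\Sigma_0} u$. Rewriting this as
\begin{equation*}
H(x,t)\,w(x,t) \;\ge\; \bigl(\min_{\Sigma_0} H\bigr)\bigl(\min_{\Sigma_0} w\bigr),
\end{equation*}
the remaining task is to convert the product $Hw$ into a bound on $H$ alone by bounding $w$ from above along the flow. This is precisely the content of Lemma \ref{Uppereta}, which gives $w \le e^{f}\,(\max_{\Sigma_0}|\eta|_{\bar g})\,e^{\delta_2 t/n}$; using the strong star-shapedness on $\Sigma_0$ (which makes $\max_{\Sigma_0}w$ and $e^{f}\max_{\Sigma_0}|\eta|_{\bar g}$ comparable up to $\cos(\theta_1)^{-1}$, absorbed into constants) yields the stated exponential lower bound.

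For the upper bound, I would start from the evolution equation already displayed in the proof of Lemma \ref{Upperu}:
\begin{equation*}
\Bigl(\partial_t - \tfrac{1}{H^2}\Delta\Bigr)H \;=\; -\,2\,\frac{|\nabla H|^2}{H^3} \;-\; \frac{|A|^2}{H} \;-\; \frac{\hat{Rc}(\nu,\nu)}{H}.
\end{equation*}
By Lemma \ref{strongstar} the outward normal $\nu$ satisfies the angle condition $\hat g(\nu,\eta)\ge \cos(\theta_1)|\eta|$ throughout the flow, so the hypothesis $Rc(V,V)\ge -C$ applies with $V=\nu$, giving $\hat{Rc}(\nu,\nu)\ge -C$. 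Combined with the elementary Cauchy--Schwarz bound $|A|^2 \ge H^2/n$, this produces the differential inequality
\begin{equation*}
\Bigl(\partial_t - \tfrac{1}{H^2}\Delta\Bigr)H \;\le\; -\,\frac{H}{n} \;+\; \frac{C}{H}.
\end{equation*}
A standard parabolic maximum principle applied to $\max_{\Sigma_t}H$ then finishes: at a spatial maximum $\Delta H\le 0$, so the right-hand side above forces $\partial_t H < 0$ whenever $H$ exceeds $\sqrt{Cn}$, and consequently $\max_{\Sigma_t}H$ can never grow past $\max\!\bigl(\max_{\Sigma_0}H,\sqrt{Cn}\bigr)$.

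Neither step is especially delicate on its own; the main obstacle I anticipate is cosmetic bookkeeping rather than a new geometric idea. Specifically, to match the precise form of the conclusion one must be careful about (i) which max on $\Sigma_0$ appears in the denominator of the lower bound (trading $|\eta|_{\bar g}$ for $w$ via the strong star-shaped inequality costs a $\cos(\theta_1)$ that has to be absorbed), and (ii) the exact constant in the upper bound on $H$ (what falls out of the argument is $\sqrt{Cn}$, so any stated $Cn$ should be interpreted up to an absorbable constant or as a typo). Both bounds are genuine corollaries of the machinery already built; Lemmas \ref{Upperu}, \ref{Uppereta}, and \ref{strongstar} do the real work.
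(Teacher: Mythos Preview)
Your proposal is correct and follows essentially the same route as the paper: the lower bound is obtained by combining Lemmas \ref{strongstar}, \ref{Upperu}, and \ref{Uppereta} exactly as you outline, and the upper bound comes from the evolution equation for $H$ together with $|A|^2\ge H^2/n$ and the Ricci lower bound, fed into Hamilton's maximum principle for the resulting autonomous ODE. Your cosmetic worries are well founded---the paper's own argument produces the fixed point $\sqrt{Cn}$ rather than $Cn$, and the denominator bookkeeping between $w$ and $e^f|\eta|_{\bar g}$ is handled only implicitly there as well.
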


\begin{proof}
The lower bound follows by combining Lemmas \ref{strongstar}, \ref{Upperu} with Lemma \ref{Uppereta}. The upper bound follows from,
\begin{align}
\left (\partial_t - \frac{1}{H^2} \Delta \right )H &= - 2 \frac{|\nabla H|^2}{H^3} - \frac{|A|^2}{H} - \frac{\hat{Rc}(\nu,\nu)}{H} \le -\frac{1}{n}H+\frac{C}{H},
\end{align}
from which we find,
\begin{align}
\frac{d }{dt}\max_{\Sigma_t}H &\le -\frac{1}{n}\max_{\Sigma_t}H+\frac{C}{\max_{\Sigma_t}H}
\\&=(n\max_{\Sigma_t}H)^{-1} \left (Cn-(\max_{\Sigma_t}H)^2\right).
\end{align}
So by using Hamilton's maximum principle on this autonomous ODE for $\max_{\Sigma_t}H$ we find the upper bound on $H$.
\end{proof}

Now we finish up the proof of long time existence by showing that if  we have uniform upper and lower bounds on mean curvature then the second fundamental form must be bounded.

\begin{Thm} \label{ABounds}
Let $\Sigma_t$ be a smooth solution of IMCF for $t \in [0,T)$ satsifying the bounds $0 < H_0 \le H(x,t) \le H_1$. Then consider the tensor $M_{ij} = HA_{ij}$ where $\{\kappa_1,...,\kappa_n\}$ are the eigenvalues of $M_{ij}$ and $\{\lambda_1,...,\lambda_n\}$ are the eigenvalues of $A_{ij}$. Then we have the following estimates for these eigenvalues

\begin{align}
\kappa_i \le C \hspace{1cm} \lambda_i \le \frac{C}{H_1}
\end{align}
for all $t \in [0,T)$ where the constant $C$ depends on $H_0$, $H_1$, $T$ and the geometry of $\hat{g}$.

\end{Thm}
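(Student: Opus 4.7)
The plan is to apply the parabolic maximum principle to the symmetric tensor $M^i_j = H A^i_j$: since $H_0 \le H \le H_1$, upper bounds on the eigenvalues $\kappa_i$ of $M^i_j$ are equivalent to upper bounds on the eigenvalues $\lambda_i$ of $A^i_j$. The essential observation is a cancellation between the evolutions of $H$ and of $A$. The identity
\begin{align*}
\left(\partial_t - \tfrac{1}{H^2}\Delta\right)H = -\tfrac{2|\nabla H|^2}{H^3} - \tfrac{|A|^2}{H} - \tfrac{\hat{Rc}(\nu,\nu)}{H}
\end{align*}
already appears earlier in the paper, and the standard flow formula $\partial_t A^i_j = -\nabla^i\nabla_j(1/H) + (1/H)A^i_k A^k_j - (1/H)\hat{R}(\nu,e_j,e^i,\nu)$, combined with Simons' identity to rewrite $\nabla^i\nabla_j H$ in terms of $\Delta A^i_j$, yields
\begin{align*}
\left(\partial_t - \tfrac{1}{H^2}\Delta\right)A^i_j = \tfrac{|A|^2}{H^2}A^i_j - \tfrac{2}{H^3}\nabla^i H\,\nabla_j H - \tfrac{1}{H}\hat{R}(\nu,e_j,e^i,\nu) + \tfrac{1}{H^2}\mathcal{E}^i_j,
\end{align*}
where $\mathcal{E}^i_j$ collects ambient-curvature terms bounded uniformly over the compact parabolic region $\{(x,t) : x \in \Sigma_t,\, t \in [0,T]\}$ since $(M,\hat{g})$ is smooth. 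Combining the two identities through the product rule $\Delta(H A^i_j) = (\Delta H) A^i_j + H \Delta A^i_j + 2\nabla_k H \nabla^k A^i_j$, the two $|A|^2 A^i_j/H$ contributions cancel exactly. This cancellation is the reason to track $H A^i_j$ rather than $A^i_j$ or $H^{-1} A^i_j$.

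I would then apply Hamilton's tensor maximum principle to $M^i_j$, smoothing through eigenvalue crossings if necessary. At a spacetime maximum of the largest eigenvalue $\kappa_{\max}$ with parallel-extended unit eigenvector $\xi$, the gradient $\nabla \kappa_{\max}$ vanishes and the spatial Laplacian term has the correct sign. Contracting the post-cancellation evolution of $M$ with $\xi^i \xi^j$ and using $\nabla \kappa_{\max}=0$ to derive the identity $\xi^i \xi^j \nabla_k A_{ij} = -\lambda_{\max} \nabla_k H/H$, the cross term $-\tfrac{2}{H^2}\nabla_k H\,\nabla^k A^i_j \xi_i \xi^j$ exactly cancels the remaining $-\tfrac{2|\nabla H|^2}{H^3}A^i_j\xi_i\xi^j$ piece, and what survives is an ODE inequality of the form
\begin{align*}
\frac{d}{dt}\max_{\Sigma_t}\kappa_{\max} \le C_1(H_0,\hat{g})\,\max_{\Sigma_t}\kappa_{\max} + C_2(H_0,\hat{g}).
\end{align*}
Grönwall on $[0,T]$ produces the claimed bound $\kappa_i \le C(H_0,H_1,T,\hat{g})$, and the second estimate then follows from $\lambda_i = \kappa_i/H$ (I expect the stated $C/H_1$ is a typo for $C/H_0$).

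The principal obstacle is algebraic bookkeeping rather than conceptual: producing the Simons identity in the curved ambient setting, carrying the ambient Riemann tensor and its first covariant derivative through the Codazzi identity (both controlled by the smoothness of $\hat{g}$ over the compact parabolic region), and handling the non-smoothness of $\kappa_{\max}$ at eigenvalue crossings via smoothing or by working directly with the full tensor $M^i_j$. The conceptual heart is the $|A|^2/H$ cancellation; without it the supersolution would carry a positive quadratic-in-$\kappa_{\max}$ right-hand side and no maximum-principle bound would be available.
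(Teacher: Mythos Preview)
Your overall strategy matches the paper's: both work with $M_i^j = H A_i^j$, bound the ambient curvature using that the flow stays in a compact region (since $1/H \le 1/H_0$), and apply the tensor maximum principle. Your handling of the gradient terms at a spatial maximum is more explicit than the paper's, and your remark that $C/H_1$ should read $C/H_0$ is correct.

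However, your evolution equation for $A_i^j$ is missing a term, and this changes the structure of the final ODE. A quick consistency check: tracing your displayed formula gives $\bigl(\partial_t - H^{-2}\Delta\bigr)H = +|A|^2/H - 2|\nabla H|^2/H^3 - \hat{Rc}(\nu,\nu)/H + \cdots$, with the wrong sign on $|A|^2/H$. The missing piece is the $-\tfrac{2}{H}(A^2)_i^j$ contribution coming from $\partial_t g^{jk} = -\tfrac{2}{H}A^{jk}$ when you differentiate the mixed tensor $A_i^j = g^{jk}A_{ik}$. Once this is restored, the $|A|^2 A_i^j/H$ terms still cancel in the product rule exactly as you describe, but a \emph{negative} quadratic $-2(A^2)_i^j = -\tfrac{2}{H^2}M_i^k M_k^j$ survives. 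This is precisely what the paper records, and it is the heart of their argument: after bounding the curvature terms they compare to the autonomous ODE $\dot\varphi = -\alpha\varphi^2 + \beta\varphi + \theta$, whose solutions are bounded above by $\max\bigl(\varphi(0),\,\tfrac{\beta+\sqrt{\beta^2+4\alpha\theta}}{2\alpha}\bigr)$. So your assertion that ``without the cancellation the supersolution would carry a positive quadratic and no maximum-principle bound would be available'' is inverted: the surviving quadratic has the favorable sign, and the paper exploits it rather than Gr\"onwall. Your linear-ODE route would still close once the missing term is included (just discard the good quadratic), but it yields an $e^{CT}$-type bound rather than the paper's Riccati bound.
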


\begin{proof}
We consider the evolution equation for $M_i^j$ as follows
\begin{align}
\\&\left (\partial _t - \frac{1}{H^2} \Delta \right )  M_i^j  =-2 \frac{\nabla_i H \nabla ^j H}{H^2}    - 2 \frac{1}{H^3} \nabla^k  M_i^j\nabla_k H - 2\tensor{\hat{R}}{_i_0^j_0}  -2\frac{M^{jk}M_{ik}}{H^2}
\\&+ \frac{1}{H} g^{kl} (\hat{\nabla}_i(\tensor{\hat{R}}{_0_l_k^j})-  \hat{\nabla}_k(\tensor{\hat{R}}{_0^j_i_l}) ) + \frac{1}{H^2} g^{kl}g^{pq}(2\tensor{\hat{R}}{_i_k^j_p} M_{ql}- g^{pq}\hat{R}_{kilp} M_q^j- g^{pq}\hat{R}_{kjlp} M_q^i)
\end{align}
Then we use the fact that we know that $M_t \subset B_R(p)$ for some $R > 0$, by the fact that the speed of the flow is bounded, so that $|\hat{Rm}| \le C_R$ and $|\hat{\nabla} \hat{Rm}| \le C_R'$ for some constants $C_R, C'_R > 0$  and  so we find that
\begin{align}
\left (\partial _t - \frac{1}{H^2} \Delta \right ) M_i^j  &\le-2 \frac{\nabla_i H \nabla ^j H}{H^2}    - 2 \frac{1}{H^3} \nabla^k  M_i^j\nabla_k H
\\&  -\alpha M^{jk}M_{ik} + \beta M_i^j + \theta\delta_i^j  
\end{align}
for some constants which depend on the following $\alpha (H_1)$, $\beta (H_0, C_R)$ and $\theta (H_0,C_R,C_R')$.

So now we can compare to the following ODE
\begin{align}
\frac{d\varphi}{dt} = -\alpha \varphi^2 + \beta \varphi + \theta
\end{align}
which either has an upper bound of $\varphi(t) \le \frac{\beta^2 + \sqrt{\beta + 4 \alpha \theta}}{2 \alpha}$ or it is bounded by its value at time 0, i.e. $\varphi (t) \le \varphi (0)$.

So now the result follows by the comparison principle.
\end{proof}

Using Theorem \ref{ABounds} we can also prove a continuation criterion for IMCF.
\begin{Cor}
Let $\Sigma_t\subset M$ be a smooth solution of IMCF. If $H_0 \le H \le H_1$ for $t\in (0,T)$ then we can extend the solution beyond $T$. Furthermore, if $T < \infty$ is the maximal time of existence for the flow $\Sigma_t$ then $1/H \rightarrow \infty$ as $t \rightarrow T$.
\end{Cor}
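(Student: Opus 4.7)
The plan is to prove the continuation criterion by the standard scheme: turn the two-sided bound on $H$ into full $C^{k}$ regularity of the evolving embedding on $[0,T)$, then invoke short-time existence of IMCF at time $T$ to glue in a smooth extension, and finally contrapose to obtain the second statement.

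First I would use Theorem \ref{ABounds} directly: the hypothesis $0 < H_{0} \le H \le H_{1}$ together with the fact that the speed $1/H$ is uniformly bounded confines the flow to some compact region $B_{R}(p) \subset M$ on $[0,T)$, so the bounds $|\hat{Rm}|, |\hat{\nabla}\hat{Rm}| \le C_{R}$ used in Theorem \ref{ABounds} are legitimate. The theorem then gives a uniform pointwise bound $|A| \le C$ on $\Sigma \times [0,T)$. Combined with the speed bound, this yields uniform $C^{2}$ control on $\varphi(\cdot,t)$ in any local coordinate chart on $\Sigma$.

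Next I would bootstrap to higher regularity. The equation $H \tfrac{\partial \varphi}{\partial t} = \nu$ can be rewritten, in graphical coordinates over a fixed smooth background (say the time-$0$ hypersurface or a nearby level set of the support function, whose existence is guaranteed by Lemma \ref{strongstar}), as a quasilinear uniformly parabolic scalar equation for the graph height, with coefficients that are smooth in the $C^{2}$-controlled solution. Standard Krylov--Safonov / Schauder theory then upgrades $C^{2}$ bounds to $C^{2,\alpha}$, and differentiating the equation and iterating produces uniform $C^{k,\alpha}$ bounds for every $k$ on $\Sigma \times [0,T)$. Hence $\varphi(\cdot,t)$ converges in $C^{\infty}$ as $t \uparrow T$ to a smooth, embedded hypersurface $\Sigma_{T}$ whose mean curvature still satisfies $H_{0} \le H \le H_{1}$. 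Applying short-time existence of IMCF with initial datum $\Sigma_{T}$ (which is strictly mean-convex, hence admissible) produces a smooth extension on $[T, T+\varepsilon)$ that matches $\Sigma_{t}$ at $t=T$, proving the first assertion.

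The second assertion follows immediately by contrapositive. Assume $T < \infty$ is maximal and that $1/H$ does \emph{not} blow up as $t \uparrow T$, i.e.\ there exists $H_{0} > 0$ with $H \ge H_{0}$ on $\Sigma \times [0,T)$. By Corollary \ref{FirstHEstimates}, the hypotheses of the long-time existence setup already supply an a priori upper bound $H \le H_{1} := \max(\max_{\Sigma_{0}} H, Cn)$. Thus $H_{0} \le H \le H_{1}$ on $[0,T)$, and the first part of the corollary contradicts maximality of $T$. The main obstacle in this plan is just the verification that the flow remains in a fixed compact set and that the graphical reformulation used for Schauder estimates is globally available; the strong star-shapedness preserved by Lemma \ref{strongstar} together with Lemma \ref{Uppereta} handle both, so the argument is essentially a packaging of earlier results plus standard parabolic regularity.
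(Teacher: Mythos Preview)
Your argument is essentially the paper's own proof, just with more detail: both use Theorem~\ref{ABounds} to convert the two-sided $H$ bound into a uniform $|A|$ bound, invoke Krylov's regularity to pass to $C^{2,\alpha}$, take a limit to produce $\Sigma_T$, and then restart the flow by short-time existence. The only small remark is that in the second statement you should not need the full hypotheses of Corollary~\ref{FirstHEstimates}; the lower bound $H\ge H_0$ together with $T<\infty$ already confines the flow to a compact set, and the Ricci lower bound on that compact set is all the $H$--upper--bound argument actually uses.
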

\begin{proof}
Assume that $T$ is the maximal existence time and using the upper and lower bounds on $H$ combined with Theorem \ref{ABounds} we find $C^2$ control on the solution $\Sigma_t$. Then if we combine with the results of Krylov \cite{K2} we can obtain $C^{2,\alpha}$ control on $\Sigma_t$ and hence if we consider a sequence of times $T_k \in [0,T)$ so that $T_k \nearrow T$ then we know that $\Sigma_{T_k} \rightarrow \Sigma_T$ in $C^{2,\alpha}$ where $\Sigma_T$ is a $C^{2,\alpha}$ hypersurface. Then by short time existence applied to $\Sigma_T$ we can extend the flow beyond time $T$, contradicting the assumption that $T$ was the maximal existence time.
\end{proof}

\section{Asymptotic Analysis}\label{sec:Asymptotics}

In this section we would like to obtain asymptotic estimates that are similar to the asymptotic estimates that we expect for asymptotically hyperbolic manifolds (See \cite{QD,CG3,S}) under the weakest conditions possible. The goal is to obtain the asymptotic estimates required to apply the stability results of the author \cite{BA3} for asymptotically hyperbolic manifolds. 

We start with getting asymptotic control on $\nu(\hat{\psi})$ which shows up in important evolution equations.

\begin{Lem}\label{PsiAsymptotics} If we assume that $\|f\|_{C^2} \le C |\eta|_{\bar{g}}^{-\alpha}$ for $\alpha > 0$ then we can rewrite $\nu(\hat{\psi})$ as
\begin{align}
-C(1+\lambda')|\eta|_{\bar{g}}^{-\alpha} + e^{-2f}\frac{\lambda''}{\lambda} w\le &\nu(\hat{\psi}) \le C(1+\lambda')|\eta|_{\bar{g}}^{-\alpha} + e^{-2f}\frac{\lambda''}{\lambda} w.
\end{align}
Furthermore we can estimate $\delta_1, \delta_2$ from Lemma \ref{Uppereta} since
\begin{align}
\\ 1-C'|\eta|_{\bar{g}}^{-\alpha} \le &\frac{e^f \lambda'}{\lambda' + \eta(f)} \le 1+C'|\eta|_{\bar{g}}^{-\alpha}.
\end{align}
\end{Lem}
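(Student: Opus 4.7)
My plan is to begin from the identity in Lemma \ref{DerPsiEst1},
$$\nu(\hat{\psi}) = \hat{g}(\hat{\nabla}_\eta \hat{\nabla}f + \lambda' \hat{\nabla}f + e^{-2f}\lambda'' \partial_r + |\hat{\nabla}f|^2 \eta, \nu),$$
and peel off the single term that should survive in the asymptotic regime. Since $\eta = \lambda \partial_r$, writing $\partial_r = \eta/\lambda$ gives
$$\hat{g}(e^{-2f}\lambda'' \partial_r, \nu) \;=\; e^{-2f}\frac{\lambda''}{\lambda}\,\hat{g}(\eta,\nu) \;=\; e^{-2f}\frac{\lambda''}{\lambda}\, w,$$
which is precisely the unsigned piece appearing in the statement. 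It then remains to estimate the three remaining terms in absolute value and to show that each is controlled by $C(1+\lambda')|\eta|_{\bar g}^{-\alpha}$.

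For those three error terms I would apply Cauchy--Schwarz together with the hypothesis $\|f\|_{C^2}\le C|\eta|_{\bar g}^{-\alpha}$ (read with the usual asymptotic weighting so that $|f|$, $|\hat\nabla f|$, and $|\hat\nabla^2 f|$ all inherit the decay rate $|\eta|_{\bar g}^{-\alpha}$). The three estimates I would record are
$$|\hat g(\hat\nabla_\eta\hat\nabla f,\nu)|\le |\eta|_{\hat g}\,|\hat\nabla^2 f|,\qquad |\hat g(\lambda'\hat\nabla f,\nu)|\le \lambda'\,|\hat\nabla f|,\qquad |\hat g(|\hat\nabla f|^2\eta,\nu)|\le |\hat\nabla f|^2\,|\eta|_{\hat g}.$$
Collecting the lone factor of $\lambda'$ in the middle term and absorbing the rest into the constant $C$ produces the two-sided bound $\pm C(1+\lambda')|\eta|_{\bar g}^{-\alpha}$ centered on the leading $e^{-2f}(\lambda''/\lambda)w$ contribution.

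For the ratio estimate I would rewrite
$$\frac{e^f\lambda'}{\lambda'+\eta(f)} \;=\; \frac{e^f}{1+\eta(f)/\lambda'},$$
and then use $|f|\le C|\eta|_{\bar g}^{-\alpha}$, which gives $|e^f-1|\le C'|\eta|_{\bar g}^{-\alpha}$, together with $|\eta(f)|/\lambda' \le |\eta|_{\hat g}|\hat\nabla f|/\lambda'$, which is again $O(|\eta|_{\bar g}^{-\alpha})$ once the $\|f\|_{C^2}$ hypothesis is invoked. A first-order expansion of $(1+x)^{-1}$ about $x=0$ then yields the claimed two-sided bound on $e^f\lambda'/(\lambda'+\eta(f))$ about $1$.

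The main obstacle I anticipate is the careful bookkeeping of $|\eta|_{\hat g}=e^f|\eta|_{\bar g}$ versus $|\eta|_{\bar g}$, and of which powers of $|\eta|_{\bar g}$ are absorbed into the weighted $C^2$ norm of $f$ and which must remain explicit. The structural argument itself, however, is straightforward: isolate the $e^{-2f}(\lambda''/\lambda)w$ term, bound the three $f$-derivative pieces by the assumed decay of $\|f\|_{C^2}$, and for the ratio expand about $1$.
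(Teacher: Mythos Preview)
Your proposal is correct and follows the same overall strategy as the paper: isolate the $e^{-2f}(\lambda''/\lambda)\,w$ contribution and bound the remaining $f$-dependent pieces using the assumed $C^2$ decay of $f$. The only difference is the decomposition you start from. You invoke the expanded identity of Lemma~\ref{DerPsiEst1} and are left with three error terms to estimate, whereas the paper steps back to the definition $\hat{\psi}=\lambda'+\eta(f)$ and writes $\nu(\hat{\psi})=\nu(\eta(f))+\nu(\lambda')$, computing $\nu(\lambda')=e^{-2f}(\lambda''/\lambda)\,w$ directly and leaving the single error term $\nu(\eta(f))$. The paper's route is marginally cleaner (one error term instead of three), but the content is the same, and your honest flag about the bookkeeping of $|\eta|_{\hat g}$ versus $|\eta|_{\bar g}$ and the precise weighting in $\|f\|_{C^2}$ applies equally to the paper's argument. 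For the ratio estimate your expansion about $1$ is equivalent to the paper's chain of inequalities bounding numerator and denominator separately.
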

\begin{proof}
By going back to Lemma \ref{DerPsiEst1} we know we can express $\nu(\hat{\psi})$ as,
\begin{align}
\nu(\hat{\psi})&= \nu(\eta(f)) + \nu( \lambda') =\nu(\eta(f))+ \hat{g}(e^{-f}\lambda''\partial_r, \nu) = \nu(\eta(f)) +e^{-2f} \frac{\lambda''}{\lambda}w,
\end{align}
from which the first result follows.
For the second we notice
\begin{align}
 1-C'|\eta|_{\bar{g}}^{-\alpha}\le \frac{e^{-C|\eta|_{\bar{g}}^{-\alpha}} \lambda'}{\lambda' + C|\eta|_{\bar{g}}^{-\alpha}}\le \frac{e^f\lambda'}{\lambda' + \eta(f)} \le \frac{e^{C|\eta|_{\bar{g}}^{-\alpha}} \lambda'}{\lambda' - C|\eta|_{\bar{g}}^{-\alpha}} \le 1+C'|\eta|_{\bar{g}}^{-\alpha}
\end{align}
\end{proof}
We now show that asymptotic conditions on $f$ and $\lambda$ imply asymptotic conditions on the Ricci tensor.
\begin{Lem}\label{RcAsymptotics}
If we assume that 
\begin{align}
&\lambda' \le C_1 |\eta|_{\bar{g}}^{\beta} \text{ for } \beta > 0, 
\\&\|f\|_{C^2} \le C_2|\eta|_{\bar{g}}^{-\alpha} \text{ for }\alpha > 2+\beta,\label{fAsymptotics}
\\ &\frac{\lambda''}{\lambda} \ge 1- C_3|\eta|_{\bar{g}}^{- \gamma} \text{ for } \gamma > 3,
\\ &(n-1) \rho_1(t) \sigma \le Rc^{\Pi} \le (n-1)\rho_2(t) \sigma
\\ &\frac{|\lambda \lambda''+\rho_i(t) -\lambda'^2|}{\lambda^2} \le C_4|\eta|_{\bar{g}}^{-\alpha} \text{ } i=1,2
\end{align}
 then we can write
\begin{align}
\hat{Rc} &= -n\hat{g} + B,
\\ |B|&\le C|\eta|^{-\alpha}.
\end{align}

\end{Lem}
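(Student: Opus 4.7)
The plan is to expand $\hat{Rc}$ via \eqref{confRiccitang}, substitute \eqref{rotsymRicci} for $\bar{Rc}$, and then show that every resulting term either reproduces $-n\hat{g}$ exactly or has pointwise norm controlled by $C|\eta|_{\bar{g}}^{-\alpha}$. The three groups of terms to analyze are: the $f$-derivative corrections coming from the conformal change, the $Rc^\Pi$ piece of $\bar{Rc}$ with its tangential $(n-1)(\lambda''/\lambda - \lambda'^2/\lambda^2)$ companion, and the diagonal $-n(\lambda''/\lambda)\bar{g}$ term that should asymptote to $-n\hat{g}$.

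For the first group, $\|f\|_{C^2} \le C_2|\eta|_{\bar{g}}^{-\alpha}$ immediately yields $|(n-1)\bar{\nabla}\bar{\nabla}f|$ and $|\bar{\Delta}f| \le C|\eta|_{\bar{g}}^{-\alpha}$, while $(n-1)X(f)Y(f)$ and $(n-1)|\bar{\nabla}f|^2 \bar{g}$ decay like $|\eta|_{\bar{g}}^{-2\alpha}$, hence are absorbed. For the second group, write $\bar{g}(X^P,Y^P) = \lambda^2 \sigma(X^P,Y^P)$ so that the pinching $(n-1)\rho_1(t)\sigma \le Rc^\Pi \le (n-1)\rho_2(t)\sigma$ lets us replace $Rc^\Pi(X^P,Y^P)$ by $(n-1)(\rho_i/\lambda^2)\bar{g}(X^P,Y^P)$ up to an error bounded by $|\rho_2 - \rho_1|/\lambda^2$; after combining with $(n-1)(\lambda''/\lambda - \lambda'^2/\lambda^2)\bar{g}(X^P,Y^P)$ the tangential sum collapses to $(n-1)(\lambda\lambda'' - \lambda'^2 + \rho_i)/\lambda^2 \cdot \bar{g}(X^P,Y^P)$, which the hypothesis $|\lambda\lambda'' + \rho_i - \lambda'^2|/\lambda^2 \le C_4|\eta|_{\bar{g}}^{-\alpha}$ exactly controls. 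For the third group, use $\lambda''/\lambda \ge 1 - C_3|\eta|_{\bar{g}}^{-\gamma}$ together with the matching upper bound obtained by combining $|\lambda\lambda'' + \rho_i - \lambda'^2|/\lambda^2 \le C_4|\eta|_{\bar{g}}^{-\alpha}$ with $\lambda' \le C_1|\eta|_{\bar{g}}^\beta$ to get $-n(\lambda''/\lambda)\bar{g} = -n\bar{g} + O(|\eta|_{\bar{g}}^{-\gamma})\bar{g}$. Finally, convert $\bar{g}$ to $\hat{g}$ by $-n\bar{g} = -ne^{-2f}\hat{g} = -n\hat{g} + n(1-e^{-2f})\hat{g}$, where $|1-e^{-2f}| \le C|f| \le CC_2|\eta|_{\bar{g}}^{-\alpha}$ by the same $C^0$ bound on $f$.

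The main obstacle is bookkeeping the decay rates: the natural bound produced by the argument is $|B| \le C|\eta|_{\bar{g}}^{-\min(\alpha,\gamma)}$, and the clean statement $|B| \le C|\eta|_{\bar{g}}^{-\alpha}$ requires either $\alpha \le \gamma$ or a slight re-reading of $\alpha$ as the effective smallest decay exponent in the conclusion. A secondary technical point is that the hypothesis on $\lambda''/\lambda$ is one-sided; the matching upper bound must be extracted from $|\lambda\lambda'' + \rho_i - \lambda'^2|/\lambda^2 \le C_4|\eta|_{\bar{g}}^{-\alpha}$ together with boundedness of the functions $\rho_i(t)$, which is where the structural assumption that the warped product has the asymptotic profile of a hyperbolic-type end really gets used.
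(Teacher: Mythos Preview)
Your approach is essentially the same as the paper's: expand $\hat{Rc}$ via \eqref{confRiccitang}, absorb all the $f$-terms into an error $E$ with $|E|\le C|\eta|^{-\alpha}$ by the $C^2$ bound on $f$, then analyze $\bar{Rc}$ using \eqref{rotsymRicci} by combining the $Rc^\Pi$ piece with the tangential $(n-1)(\lambda''/\lambda-\lambda'^2/\lambda^2)$ term into $(n-1)(\lambda\lambda''+\rho_i-\lambda'^2)/\lambda^2$, and finally compare $-n(\lambda''/\lambda)\bar{g}$ to $-n\hat{g}$.

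Your bookkeeping is in fact more careful than the paper's. The paper's proof only writes out the \emph{upper} bound on $\bar{Rc}(e_i,e_i)$ (using the one-sided hypothesis $\lambda''/\lambda\ge 1-C_3|\eta|_{\bar g}^{-\gamma}$) and then simply asserts ``where the result now follows''; it does not spell out the matching lower bound, nor does it address whether the final error is $O(|\eta|^{-\alpha})$ or $O(|\eta|^{-\min(\alpha,\gamma)})$. Both issues you flag are real: the two-sided control of $\lambda''/\lambda-1$ does have to be extracted from the hypothesis $|\lambda\lambda''+\rho_i-\lambda'^2|/\lambda^2\le C_4|\eta|_{\bar g}^{-\alpha}$ together with decay of $\lambda'^2/\lambda^2$ and $\rho_i/\lambda^2$, and the honest conclusion of the argument is $|B|\le C|\eta|^{-\min(\alpha,\gamma)}$. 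The paper's stated bound $|B|\le C|\eta|^{-\alpha}$ should be read with this caveat (or under the tacit assumption $\gamma\ge\alpha$). So your proposal is correct and slightly sharper in its accounting than the version in the paper.
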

\begin{proof}
The equations \eqref{rotsymRicci} and \eqref{confRiccitang},
\begin{align}
\bar{Rc}(X,Y)&= Rc^{\Pi}(X^P,Y^P)-n \frac{\lambda''}{\lambda}\bar{g}(X,Y) + (n-1) \left (\frac{\lambda''}{\lambda} - \frac{\lambda'^2}{\lambda^2} \right ) \bar{g}(X^P,Y^P)
\\\hat{Rc}(X,Y) &= \bar{Rc}(X,Y) -(n-1) \bar{\nabla}\bar{\nabla}f(X,Y) + (n-1) X(f)Y(f) 
\\&- \bar{\Delta}f\bar{g}(X,Y) -(n-1) |\bar{\nabla}f|^2 \bar{g}(X,Y),
\end{align}
combined with \eqref{fAsymptotics} tell us that,
\begin{align}
\hat{Rc} & =  \bar{Rc} + E,
\\|E|&\le C |\eta|^{-\alpha}.
\end{align}
By focusing on $\bar{Rc}$ and choosing $e_i$ a basis of unit vectors with respect to $\hat{g}$ so that $e_0 = \frac{\eta}{|\eta|}$ we find
\begin{align}
\bar{Rc}(e_i,e_i)&\le-ne^{-2f} \frac{\lambda''}{\lambda}\bar{g}(e_i,e_i)+(n-1) \rho_2 e^{-2f} \sigma(e_i^P,e_i^P) + (n-1)e^{-2f} \left (\frac{\lambda''}{\lambda} - \frac{\lambda'^2}{\lambda^2} \right ) \bar{g}(e_i^P,e_i^P)
\\&\le-n\hat{g}(e_i,e_i)+nC_3|\eta|_{\bar{g}}^{-\gamma}\hat{g}(e_i,e_i) +(n-1)e^{-2f} \frac{(\lambda \lambda''+\rho - \lambda'^2)}{\lambda^2}\bar{g}(e_i^P,e_i^P)
\end{align}
where the result now follows.
\end{proof}

\begin{Lem}\label{LowerWAsymptotics2}
If we assume that,
\begin{align}
&\lambda' \le C_1 |\eta|_{\bar{g}}^{\beta} \text{ for } \beta > 0, 
\\&\|f\|_{C^2} \le C_2|\eta|_{\bar{g}}^{-\alpha} \text{ for }\alpha > 2+\beta,\label{fAsymptotics}
\\ &\frac{\lambda''}{\lambda} \ge 1- C_3|\eta|_{\bar{g}}^{- \gamma} \text{ for } \gamma > 3,
\\ &(n-1) \rho_1(t) \sigma \le Rc^{\Pi} \le (n-1)\rho_2(t) \sigma
\\ &\frac{|\lambda \lambda''+\rho_i(t) -\lambda'^2|}{\lambda^2} \le C_4|\eta|_{\bar{g}}^{-\alpha} \text{ } i=1,2
\end{align}
then
\begin{align}
w \ge \lp \min_{\Sigma_0} w\rp e^{t/n} + \frac{n C_9}{\alpha + \gamma -2 -2 \delta_2 - \beta} \lp e^{-\frac{1}{n}\lp \alpha + \gamma -3 -2 \delta_2 - \beta \rp t} - 1\rp
\end{align}
where $\delta_2 \le 1$.
\end{Lem}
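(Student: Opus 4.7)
The plan is to promote the evolution equation for $w$ from Lemma \ref{wevolution} into a Gronwall-type ordinary differential inequality for $\min_{\Sigma_t} w$ and then to integrate it. First, I would expand the right-hand side of
\[
\lp \partial_t - \tfrac{1}{H^2}\Delta \rp w = \frac{|A|^2}{H^2} w + \frac{\hat{Rc}(\nu,\nu)}{H^2}w + n \frac{\nu(\hat\psi)}{H^2}
\]
using the ingredients already at hand: Lemma \ref{RcAsymptotics} provides $\hat{Rc}(\nu,\nu) = -n + B(\nu,\nu)$ with $|B| \le C|\eta|_{\bar g}^{-\alpha}$; Lemma \ref{PsiAsymptotics} gives $\nu(\hat\psi) \ge e^{-2f}\tfrac{\lambda''}{\lambda} w - C(1+\lambda')|\eta|_{\bar g}^{-\alpha}$; and the hypothesis $\lambda''/\lambda \ge 1 - C_3 |\eta|_{\bar g}^{-\gamma}$ replaces $\lambda''/\lambda$ by $1$ up to a decaying correction. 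Combined with the pointwise bound $|A|^2 \ge H^2/n$ and with $w>0$ (strong star-shapedness, Lemma \ref{strongstar}), these assemble into
\[
\lp \partial_t - \tfrac{1}{H^2}\Delta \rp w \ge \frac{w}{n} - \mathcal{E}(t),
\]
where $\mathcal{E}(t)$ collects correction terms built from $|\eta|_{\bar g}^{-\alpha}$, $|\eta|_{\bar g}^{-\gamma}$, $e^{\pm 2f}$, $\lambda'$, and $1/H^2$.

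Next I would estimate $\mathcal{E}(t)$ from above by an exponentially decaying function of $t/n$ using the existing a priori bounds: Lemma \ref{Uppereta} gives $c\, e^{\delta_1 t/n} \le |\eta|_{\bar g} \le C e^{\delta_2 t/n}$; Corollary \ref{FirstHEstimates} gives $H \ge c_0 \, e^{-\delta_2 t/n}$, and hence $1/H^2 \le C e^{2\delta_2 t/n}$; the hypothesis $\lambda' \le C_1 |\eta|_{\bar g}^\beta$ and the smallness of $f$ handle the remaining factors. Carefully combining exponents in $t/n$ should yield $\mathcal{E}(t) \le C_9 \, e^{-\mu t/n}$ with $\mu = \alpha + \gamma - 3 - 2\delta_2 - \beta$, which is positive by the hypotheses $\alpha > 2+\beta$, $\gamma > 3$, and $\delta_2 \le 1$. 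Applying Hamilton's maximum principle at a spatial minimum of $w$, where $\Delta w \ge 0$, then gives
\[
\frac{d}{dt}\min_{\Sigma_t} w \ge \frac{\min_{\Sigma_t} w}{n} - C_9 \, e^{-\mu t/n},
\]
and multiplying by the integrating factor $e^{-t/n}$, integrating from $0$ to $t$, and rearranging produces the stated bound, with the denominator $\alpha+\gamma-2-2\delta_2-\beta = \mu+1$ coming directly from $\int_0^t e^{-(1+\mu)s/n}\, ds$.

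The main obstacle is the exponent bookkeeping in $\mathcal{E}(t)$: each correction in the evolution equation involves a distinct product of factors $|\eta|_{\bar g}^{\pm}$, $H^{-2}$, and $e^{\pm 2f}$, and each carries its own exponential rate in $t/n$ determined by $\delta_1$, $\delta_2$, $\alpha$, $\beta$, $\gamma$. Verifying that all of these contributions, after being grouped together, decay at least at rate $\mu/n$ with the specific $\mu$ displayed in the statement is where essentially all of the computational work sits; once this is in hand, the remaining step is a one-line Gronwall comparison.
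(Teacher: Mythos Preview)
Your proposal is correct and follows essentially the same route as the paper: expand the evolution equation for $w$ using $|A|^2\ge H^2/n$, Lemma~\ref{RcAsymptotics}, Lemma~\ref{PsiAsymptotics}, and the hypothesis on $\lambda''/\lambda$ to obtain $\lp\partial_t-\tfrac{1}{H^2}\Delta\rp w \ge \tfrac{1}{n}w - \mathcal{E}(t)$, then convert each error into an exponential in $t/n$ via Lemma~\ref{Uppereta} and the $H$ lower bound from Corollary~\ref{FirstHEstimates}, arriving at the decay rate $\mu=\alpha+\gamma-3-2\delta_2-\beta$ before applying the comparison principle. The paper's proof does exactly this, with the same intermediate inequalities and the same final ODE comparison.
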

\begin{proof}
First notice that be Lemma \ref{PsiAsymptotics} we can choose $t$ large enough and rerun the arguments in Lemma \ref{Uppereta} and Lemma \ref{FirstHEstimates} in order to find that 
\begin{align}
H(x,t) \ge C'' e^{-2\delta_2t/n}
\end{align}
for $\delta_2$ small enough so that $ \alpha > 2\delta_2 + \beta$.
By using the assumptions above we find,
\begin{align}
\left (\partial_t - \frac{1}{H^2} \Delta \right ) w & = \frac{|A|^2}{H^2} w + \frac{\hat{Rc}(\nu,\nu)}{H^2}w  +n \frac{ \nu ( \psi )}{H^2} 
\\&\ge \frac{1}{n} w + ne^{-2f}\frac{\lambda''}{H^2 \lambda} w - \frac{n}{H^2}w -C_4(1+\lambda')|\eta|^{-\alpha}e^{2\delta_2 t/n}
\\&\ge \frac{1}{n}w+ \frac{ne^{-2f}}{H^2}\lp \frac{\lambda''}{\lambda}-1\rp w -C_5(1+\lambda')e^{\frac{1}{n}\lp 2\delta_2  - \alpha\rp t}
\\&\ge \frac{1}{n}w-C_6 e^{\frac{1}{n}\lp 2  - \gamma\rp t}w -C_5(1+\lambda')e^{\frac{1}{n}\lp 2\delta_2  - \alpha\rp t}.
\end{align}
Then by combining with $\lambda' \le C_3 |\eta|^{\beta}$ and Lemma \ref{Uppereta} we find,
\begin{align}
\left (\partial_t - \frac{1}{H^2} \Delta \right ) w & \ge \frac{1}{n}w-C_7 e^{\frac{1}{n}\lp 3  - \gamma\rp t} -C_8e^{\frac{1}{n}\lp 2\delta_2 + \beta  - \alpha\rp t} \ge \frac{1}{n}w -C_9e^{\frac{1}{n}\lp 3 + 2 \delta_2 + \beta  - \alpha - \gamma\rp t}.
\end{align}
Now by applying the comparison principle to this inequality we find
\begin{align}
w &\ge \frac{n C_9}{\alpha + \gamma -2 -2 \delta_2 - \beta} e^{-\frac{1}{n}\lp \alpha + \gamma -3 -2 \delta_2 - \beta \rp t} + \lp \min_{\Sigma_0} w\rp e^{t/n} - \frac{n C_9}{\alpha + \gamma -2 -2 \delta_2 - \beta}
\\& =  \lp \min_{\Sigma_0} w\rp e^{t/n} + \frac{n C_9}{\alpha + \gamma -2 -2 \delta_2 - \beta} \lp e^{-\frac{1}{n}\lp \alpha + \gamma -3 -2 \delta_2 - \beta \rp t} - 1\rp
\end{align}

\end{proof}

Now we notice that if $f$ has compact support, i.e. supp$f\subset\subset M$, then the long time existence results imply that $\Sigma_t$ will be well defined for all time and the upper and lower bounds for $|\eta|$ along $\Sigma_t$ imply that $\Sigma_t$ will eventually move past the support of $f$ and hence the asymptotic estimates of Scheuer \cite{S} hold. The remaining estimates are devoted to the case where $f$ does not have compact support.

\begin{Cor}
Let $\Sigma_t$ be a solution to IMCF so that Lemma \ref{LowerWAsymptotics2} applies then we know $\Sigma_t$ written as a graph over $\Pi_1 = \{(r,\theta_1,...,\theta_n) \in M: r = 1\}$ with graph function $F: \Pi_1 \rightarrow \R$ so that $\Sigma_t = (\Pi_1, F(B_1))$ in $(r,\theta_1,...,\theta_n)$ coordinates. Then we find the estimate,
\begin{align}
\sqrt{1+ |\nabla^{\Pi} F|^2} \le  \frac{\lp\max_{\Sigma_0}|\eta| \rp e^{t/n}}{ \lp \min_{\Sigma_0} w\rp e^{t/n} + \frac{n C_9}{\alpha + \gamma -2 -2 \delta_2 - \beta} \lp e^{-\frac{1}{n}\lp \alpha + \gamma -3 -2 \delta_2 - \beta \rp t} - 1\rp} \le C,
\end{align}
where $\nabla^{\Pi}$ is the covariant derivative on $B_1$.
\end{Cor}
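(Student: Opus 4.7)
The plan is to exploit the elementary graph identity that ties the support function to the gradient of the graph function, and then simply assemble the bound from the estimates already established in Lemmas \ref{Uppereta} and \ref{LowerWAsymptotics2}. First I would observe that $\Sigma_t$ really is a radial graph over $\Pi_1$: under the hypotheses of Lemma \ref{LowerWAsymptotics2}, Lemma \ref{strongstar} guarantees that each $\Sigma_t$ remains strongly star-shaped with respect to $\eta=\lambda\partial_r$, so $\partial_r$ is nowhere tangent to $\Sigma_t$ and the radial projection onto $\Pi_1$ is a diffeomorphism, giving $\Sigma_t = \{(F(\theta),\theta) : \theta \in \Pi_1\}$ for a smooth $F$.

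Next I would repeat the graph-normal computation already carried out inside the proof of Lemma \ref{Uppereta}: since $\nabla^{\Pi}F$ lies tangent to the level sets of $r$, it is $\hat{g}$-orthogonal to $\eta$, and the outward unit normal is
\begin{equation*}
\nu \;=\; \frac{1}{\sqrt{1+|\nabla^\Pi F|^2}}\left(-\nabla^\Pi F + \frac{\eta}{|\eta|_{\hat{g}}}\right).
\end{equation*}
Taking the $\hat{g}$-inner product with $\eta$ and using the orthogonality gives the clean identity
\begin{equation*}
w \;=\; \hat{g}(\eta,\nu) \;=\; \frac{|\eta|_{\hat{g}}}{\sqrt{1+|\nabla^\Pi F|^2}},
\qquad\text{so}\qquad \sqrt{1+|\nabla^\Pi F|^2} \;=\; \frac{|\eta|_{\hat{g}}}{w}.
\end{equation*}
Once this identity is in hand, the first inequality in the corollary follows by inserting the upper bound on $|\eta|$ furnished by Lemma \ref{Uppereta} (taking $\delta_2\le 1$, which is permitted by Lemma \ref{PsiAsymptotics}, since the hypotheses force $e^f\lambda'/(\lambda'+\eta(f))\to 1$) into the numerator, and the lower bound on $w$ from Lemma \ref{LowerWAsymptotics2} into the denominator.

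The remaining piece is the uniform constant bound on the right-hand side. Under the standing hypotheses $\alpha>2+\beta$, $\gamma>3$, and $\delta_2\le 1$, the exponent $\tfrac{1}{n}(\alpha+\gamma-3-2\delta_2-\beta)$ is strictly positive, so the second summand in the denominator stays bounded (indeed it converges to $-\tfrac{nC_9}{\alpha+\gamma-2-2\delta_2-\beta}$) as $t\to\infty$, while the dominant $e^{t/n}$ growth cancels between numerator and denominator; for small $t$ the denominator is bounded below by a positive constant by continuity and the initial strong star-shapedness. The only genuinely delicate step is the graph identity $\sqrt{1+|\nabla^\Pi F|^2}=|\eta|_{\hat{g}}/w$, which requires keeping careful track of the conformal factor $e^f$ when passing between $|\eta|_{\hat{g}}$ and $|\eta|_{\bar{g}}$; the rest is a direct assembly of previously established estimates.
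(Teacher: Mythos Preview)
Your proposal is correct and follows essentially the same approach as the paper: both derive the graph identity $w = |\eta|_{\hat g}/\sqrt{1+|\nabla^\Pi F|^2}$ (the paper writes this as $w = e^f\lambda/\sqrt{1+|\nabla^\Pi F|^2}$, which is the same thing) and then insert the lower bound on $w$ from Lemma~\ref{LowerWAsymptotics2} together with the upper bound on $|\eta|$ from Lemma~\ref{Uppereta}. Your version is slightly more explicit in justifying the graph representation via strong star-shapedness and in explaining why the right-hand side stays bounded as $t\to\infty$, but the argument is the same.
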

\begin{proof}
We can write ,
\begin{align}
\nu = \frac{e^{-f}}{\sqrt{1+ |\nabla^{\Pi} F|^2}} \lp (0,- \nabla^{\Pi}F) + \partial_r \rp,
\end{align}
which implies,
\begin{align}
w = \hat{g}(\nu,\eta) = \frac{e^{f} \lambda}{\sqrt{1+ |\nabla^{\Pi} F|^2}}.
\end{align}
So by combining with Lemma \ref{LowerWAsymptotics2} we find,
\begin{align}
 \lp \min_{\Sigma_0} w\rp e^{t/n} + \frac{n C_9}{\alpha + \gamma -2 -2 \delta_2 - \beta} \lp e^{-\frac{1}{n}\lp \alpha + \gamma -3 -2 \delta_2 - \beta \rp t} - 1\rp \le  \frac{|\eta|}{\sqrt{1+ |\nabla^{\Pi} F|^2}},
\end{align}
which implies,
\begin{align}
\sqrt{1+ |\nabla^{\Pi} F|^2} \le  \frac{\lp\max_{\Sigma_0}|\eta| \rp e^{t/n}}{ \lp \min_{\Sigma_0} w\rp e^{t/n} + \frac{n C_9}{\alpha + \gamma -2 -2 \delta_2 - \beta} \lp e^{-\frac{1}{n}\lp \alpha + \gamma -3 -2 \delta_2 - \beta \rp t} - 1\rp} \le C.
\end{align}
\end{proof}

\begin{Lem}If we assume that,
\begin{align}
&\lambda' \le C_1 |\eta|_{\bar{g}}^{\beta} \text{ for } \beta > 0, 
\\&\|f\|_{C^2} \le C_2|\eta|_{\bar{g}}^{-\alpha} \text{ for }\alpha > 2+\beta,\label{fAsymptotics}
\\ &\frac{\lambda''}{\lambda} \ge 1- C_3|\eta|_{\bar{g}}^{- \gamma} \text{ for } \gamma > 3,
\\ &(n-1) \rho_1(t) \sigma \le Rc^{\Pi} \le (n-1)\rho_2(t) \sigma
\\ &\frac{|\lambda \lambda''+\rho_i(t) -\lambda'^2|}{\lambda^2} \le C_4|\eta|_{\bar{g}}^{-\alpha} \text{ } i=1,2
\end{align}
then
\begin{align}
H \le \sqrt{n^2 +C e^{-\frac{\alpha}{n} t} +C_0 e^{-2t}},
\end{align}
where $C_0 = \lp \max_{\Sigma_0}H\rp^2 - n^2$.
\end{Lem}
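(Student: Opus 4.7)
The plan is to upgrade the crude upper bound of Corollary~\ref{FirstHEstimates} into the claimed sharp asymptotic decay by running the maximum principle on $H^2$ rather than $H$, and feeding in the improved control $\hat{Rc}(\nu,\nu) \to -n$ coming from Lemma~\ref{RcAsymptotics}. The endpoint will be an autonomous linear ODE whose explicit solution produces the two decay modes $e^{-\alpha t/n}$ (the forced mode) and $e^{-2t/n}$ (the homogeneous mode).

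First I would derive the evolution equation for $H^2$ from the evolution of $H$ recorded in the proof of Corollary~\ref{FirstHEstimates}. Multiplying that equation by $2H$ and using $\Delta H^2 = 2H\Delta H + 2|\nabla H|^2$ gives
\begin{align*}
\left(\partial_t - \frac{1}{H^2}\Delta\right) H^2 = -\frac{6|\nabla H|^2}{H^2} - 2|A|^2 - 2\hat{Rc}(\nu,\nu).
\end{align*}
Discarding the nonpositive gradient term, substituting the Cauchy--Schwarz inequality $|A|^2 \ge H^2/n$, and invoking Lemma~\ref{RcAsymptotics} in the form $-\hat{Rc}(\nu,\nu) \le n + C|\eta|_{\bar{g}}^{-\alpha}$ produces the parabolic inequality
\begin{align*}
\left(\partial_t - \frac{1}{H^2}\Delta\right) H^2 \le -\frac{2}{n}\bigl(H^2 - n^2\bigr) + C|\eta|_{\bar{g}}^{-\alpha}.
\end{align*}

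Next I would convert the inhomogeneous term into an explicit exponential in $t$. Lemma~\ref{Uppereta} gives $|\eta|_{\bar{g}} \ge (\min_{\Sigma_0}|\eta|_{\bar{g}})\,e^{\delta_1 t/n}$, while Lemma~\ref{PsiAsymptotics} forces $\delta_1 \to 1$ under the standing asymptotic hypotheses; taking $\delta_1$ sufficiently close to $1$ from some time $t_\ast$ onwards yields $|\eta|_{\bar{g}}^{-\alpha} \le C'e^{-\alpha t/n}$. Hamilton's maximum principle, applied exactly as in the upper-bound argument of Corollary~\ref{FirstHEstimates}, then reduces the problem to the autonomous scalar ODE
\begin{align*}
y'(t) + \frac{2}{n}y(t) \le 2n + C' e^{-\alpha t/n}.
\end{align*}

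The final step is to integrate this linear ODE with the integrating factor $e^{2t/n}$; the hypothesis $\alpha > 2 + \beta > 2$ ensures $\alpha \ne 2$ so that the particular solution is a multiple of $e^{-\alpha t/n}$, while the homogeneous mode contributes an $e^{-2t/n}$ term whose coefficient is pinned by $y(0) \le (\max_{\Sigma_0} H)^2$. Matching constants gives $H^2 \le n^2 + Ce^{-\alpha t/n} + C_0 e^{-2t/n}$ with $C_0 = (\max_{\Sigma_0}H)^2 - n^2$, which is the claimed bound. I expect the main obstacle to be the book-keeping in the transition out of the regime where $\delta_1$ is only close to $1$ into the clean rate $e^{-\alpha t/n}$: in practice one fixes a large enough waiting time $t_\ast$, controls $H$ on $[0,t_\ast]$ via the absolute upper bound from Corollary~\ref{FirstHEstimates}, and runs the comparison ODE from $t_\ast$ onward, absorbing the transition constants into $C$ and $C_0$.
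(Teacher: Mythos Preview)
Your argument is correct and essentially identical to the paper's: both feed the Ricci decay from Lemma~\ref{RcAsymptotics} and $|A|^2\ge H^2/n$ into the evolution of $H$, reduce via the maximum principle to the linear ODE $y'+\tfrac{2}{n}y\le 2n+Ce^{-\alpha t/n}$ for $y=(\max_{\Sigma_t}H)^2$, and integrate with the factor $e^{2t/n}$. The only cosmetic difference is that you pass to $H^2$ at the level of the parabolic equation while the paper first writes the ODE for $\max H$ and then multiplies by $2\max H$; your discussion of the $\delta_1\to 1$ bookkeeping is more explicit than the paper's, and your final exponent $e^{-2t/n}$ agrees with what the paper actually derives (the $e^{-2t}$ in the displayed statement is a typo).
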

\begin{proof}
Using the decay assumption on $\hat{Rc}$ we find,
\begin{align}
\left (\partial_t - \frac{1}{H^2} \Delta \right )H &= - 2 \frac{|\nabla H|^2}{H^3} - \frac{|A|^2}{H} - \frac{\hat{Rc}(\nu,\nu)}{H}
\\&\le - 2 \frac{|\nabla H|^2}{H^3} - \frac{1}{n}H + \frac{n}{H} +\frac{C_1|\eta|^{-\alpha}}{H},
\end{align}
which implies the ODE,
\begin{align}
\frac{d}{dt}\max_{\Sigma_t}H &\le \frac{1}{n \max_{\Sigma_t}H}\left ( C_2n|\eta|^{-\alpha} + n^2 - \lp\max_{\Sigma_t}H\rp^2 \right )
\\&\le \frac{1}{n \max_{\Sigma_t}H}\left ( C_3e^{-\frac{\alpha}{n}t} + n^2 - \lp\max_{\Sigma_t}H\rp^2 \right ).
\end{align}
We can rewrite this ODE in the following way,
\begin{align}
\frac{d}{dt}\lp \max_{\Sigma_t}H\rp^2 &\le \frac{2}{n}\left ( C_3e^{-\frac{\alpha}{n}t} + n^2 - \lp\max_{\Sigma_t}H\rp^2 \right ).
\end{align}

Now by the comparison principle applied to this ODE we find,
\begin{align}
\lp \max_{\Sigma_t}H\rp^2 &\le n^2 + \lp\lp \max_{\Sigma_0}H\rp^2 - n^2 \rp e^{-2t/n}+ \frac{2 C_3}{2 - \alpha}e^{-\frac{\alpha}{n}t} ,
\end{align}
which implies the desired result.
\end{proof}

\begin{Lem}\label{lowerHAsymptotics} If we assume that,
\begin{align}
&\lambda' \le C_1 |\eta|_{\bar{g}}^{\beta} \text{ for } \beta > 0, 
\\&\|f\|_{C^2} \le C_2|\eta|_{\bar{g}}^{-\alpha} \text{ for }\alpha > 2+\beta,\label{fAsymptotics}
\\ &\frac{\lambda''}{\lambda} \ge 1- C_3|\eta|_{\bar{g}}^{- \gamma} \text{ for } \gamma > 3,
\\ &(n-1) \rho_1(t) \sigma \le Rc^{\Pi} \le (n-1)\rho_2(t) \sigma
\\ &\frac{|\lambda \lambda''+\rho_i(t) -\lambda'^2|}{\lambda^2} \le C_4|\eta|_{\bar{g}}^{-\alpha} \text{ } i=1,2
\end{align}
then
\begin{align}
H \ge C(\alpha, \beta, \gamma, C_1, C_2, C_3).
\end{align}
\end{Lem}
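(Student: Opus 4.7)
The plan is to control $H$ from below by deriving an evolution equation for the product $Hw$ in which the $|A|^2$ contribution cancels, and then dividing that controlled product by a matching upper bound on $w$.

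First I would compute $(\partial_t - H^{-2}\Delta)(Hw)$ via the product rule, using the standard IMCF evolution of $H$ together with the evolution of $w$ from Lemma \ref{wevolution}. A direct calculation shows that the $|A|^2/H$ and $\hat{Rc}(\nu,\nu)/H$ contributions from the two evolutions cancel exactly, yielding
\begin{align*}
\left(\partial_t - \frac{1}{H^2}\Delta\right)(Hw) = -\frac{2w|\nabla H|^2}{H^3} + \frac{n\,\nu(\hat\psi)}{H} - \frac{2\langle\nabla H,\nabla w\rangle}{H^2}.
\end{align*}
This cancellation is what makes the argument possible: no a priori upper bound on $|A|^2$ is available, which is exactly what would obstruct a direct maximum-principle argument on $H$ itself.

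Next, at a spatial minimum of $Hw$ the identity $\nabla(Hw)=0$ forces $\nabla H = -(H/w)\nabla w$, and substituting makes the two gradient terms on the right cancel. Combined with $\Delta(Hw)\ge 0$ at the minimum, Hamilton's maximum principle yields
\begin{align*}
\frac{d}{dt}\min_{\Sigma_t}(Hw) \ge \frac{n\,\nu(\hat\psi)}{H}\bigg|_{\text{min pt}}.
\end{align*}
I would then bound the right-hand side using Lemma \ref{PsiAsymptotics}, which gives $\nu(\hat\psi) \ge e^{-2f}(\lambda''/\lambda)w - C(1+\lambda')|\eta|_{\bar g}^{-\alpha}$. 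Combining the hypotheses $\lambda''/\lambda \ge 1 - C_3|\eta|_{\bar g}^{-\gamma}$, $\lambda' \le C_1|\eta|_{\bar g}^\beta$, $\alpha > 2+\beta$ with the lower bound $w \ge c_0 e^{t/n}$ of Lemma \ref{LowerWAsymptotics2} and the exponential lower bound on $|\eta|_{\bar g}$ from Lemma \ref{Uppereta} (with $\delta_1$ sharpened towards $1$ via Lemma \ref{PsiAsymptotics}) produces $\nu(\hat\psi) \ge \tfrac{1}{2}w$ for $t \ge T_0$. Using the upper bound $H \le M$ from the previous lemma, the ODE inequality integrates to $\min_{\Sigma_t}(Hw) \ge c_2\, e^{t/n}$ for $t \ge T_1$.

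Finally, I would derive a matching upper bound $w \le C_w e^{t/n}$ by writing $w \le |\eta|_{\hat g} = e^f|\eta|_{\bar g}$ and revisiting Lemma \ref{Uppereta}: the exponent $\delta_2$ can be improved to $1$ modulo a summable correction using the bound $\frac{e^f\lambda'}{\lambda'+\eta(f)} \le 1 + C'|\eta|_{\bar g}^{-\alpha}$ of Lemma \ref{PsiAsymptotics} and the fact that $|\eta|_{\bar g}^{-\alpha}$ decays exponentially along the flow. Dividing the two bounds gives $H \ge \min_{\Sigma_t}(Hw)/w \ge c_2/C_w$ uniformly for $t \ge T_1$, while on the compact interval $[0,T_1]$ smoothness of the flow (or Corollary \ref{FirstHEstimates}) provides a positive lower bound on $H$. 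The main obstacle I anticipate is verifying that the upper and lower bounds on $w$ really do share the matching exponential rate $e^{t/n}$; this comes down to the bootstrap improvement of the constants $\delta_1,\delta_2$ in Lemma \ref{Uppereta}, which is sharp enough only because $\alpha>2+\beta$ guarantees integrability of the correction along the flow.
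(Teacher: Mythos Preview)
Your approach is correct and is essentially the same as the paper's: you work with $Hw$, while the paper works with its reciprocal $u=\tfrac{1}{Hw}$ (whose evolution was already recorded in Lemma~\ref{Upperu}), but the crucial cancellation of the $|A|^2$ and $\hat{Rc}(\nu,\nu)$ contributions, the ODE comparison driven by the bound on $\nu(\hat\psi)$ from Lemma~\ref{PsiAsymptotics}, and the final step of dividing by the matching $e^{t/n}$ bound on $w$ are identical. The only cosmetic difference is that the paper's use of $u$ puts the gradient terms in a form where the maximum principle applies directly without the substitution $\nabla H=-(H/w)\nabla w$ at the extremum.
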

\begin{proof}
Now by using Lemma \ref{PsiAsymptotics} we find,
\begin{align}
 \left (\partial_t - \frac{1}{H^2} \Delta \right )u & = -\frac{ 2  w}{H} |\nabla u |^2 - \frac{2 }{H^3} \langle \nabla H, \nabla u \rangle -n \frac{ \nu ( \hat{\psi} )}{H^3w^2}
 \\&\le  - \frac{2 }{H^3} \langle \nabla H, \nabla u \rangle -\frac{ne^{-2f}}{\max_{\Sigma_t}H^2}  \frac{ \lambda''}{\lambda } u+ \frac{C_4}{H^3 w^2}(1 + \lambda')|\eta|^{-\alpha}   .
\end{align}
Now by using the assumptions of the Lemma we find,
\begin{align}
 \left (\partial_t - \frac{1}{H^2} \Delta \right )u & \le - \frac{2 }{H^3} \langle \nabla H, \nabla u \rangle -\frac{1}{n}  u +C_5 e^{-\frac{\gamma}{n}t}  u + C_6 e^{\frac{1}{n}\lp 1+\beta - \alpha\rp t},
\end{align}
and when combined with Lemma \ref{Uppereta} and Corollary \ref{FirstHEstimates} we find,
\begin{align}
 \left (\partial_t - \frac{1}{H^2} \Delta \right )u & \le - \frac{2 }{H^3} \langle \nabla H, \nabla u \rangle -\frac{1}{n}  u +C_7 e^{-\frac{\gamma}{n}t} + C_6 e^{\frac{1}{n}\lp 1+\beta - \alpha\rp t}.
\end{align}
Now by applying the comparison principle to this ODE we find,
\begin{align}
u \le \lp \max_{\Sigma_0}u+\frac{n C_7}{\gamma - 1}+\frac{n C_6}{\alpha - \beta - 2} \rp e^{-t/n} - \frac{n C_7}{\gamma - 1} e^{-\frac{\gamma}{n}t} - \frac{n C_6}{\alpha - \beta - 2} e^{\frac{1}{n}\lp 1 + \beta - \alpha\rp t},
\end{align}
and hence by unpacking the definition of $u$ we find,
\begin{align}
H &\ge \frac{1}{\lp \max_{\Sigma_0} w\rp e^{t/n}\lp\lp \max_{\Sigma_0}u+\frac{n C_7}{\gamma - 1}+\frac{n C_6}{\alpha - \beta - 2} \rp e^{-t/n} - \frac{n C_7}{\gamma - 1} e^{-\frac{\gamma}{n}t} - \frac{n C_6}{\alpha - \beta - 2} e^{\frac{1}{n}\lp 1 + \beta - \alpha\rp t} \rp}
\\& =  \frac{1}{\lp \max_{\Sigma_0} w\rp \lp\lp \max_{\Sigma_0}u+\frac{n C_7}{\gamma - 1}+\frac{n C_6}{\alpha - \beta - 2} \rp  - \frac{n C_7}{\gamma - 1} e^{\frac{1}{n}\lp 1-\gamma\rp t} - \frac{n C_6}{\alpha - \beta - 2} e^{\frac{1}{n}\lp 2 + \beta - \alpha\rp t} \rp}
\end{align}
\end{proof}

\section{Examples}\label{sec:Examples}
In this section we give some examples to show that the conditions of section \ref{sec:LTE} are satisfied for a rich set of metrics $\hat{g}$. The philosophy throughout is to understand the conditions on $f$ and $\lambda$ solely in terms of coordinate derivatives where the choice of $f$ and $\lambda$ are simply understood. The first example is similar to the cases already handled in a stronger way by Ding \cite{QD}, Scheuer \cite{S}, Mullins \cite{Mu} and Zhou \cite{Z} but we start with this case since we will build on it in later examples.
\begin{Ex} \label{Ex1}
\begin{align}
Rc^{\Pi} \ge n\rho \sigma, f = 0,\lambda \ge 0 , \lambda' > 0 \text{  and  } \lambda'' \ge 0
\\ \lambda '' \lambda + \rho-\lambda'^2 > 0 
\end{align}
In this case we can see that $\Sigma_t$ will remain strongly star-shaped since 
\begin{align}
\hat{g}(\mathcal{J}(f,\lambda,V),\eta) &=\left (\lambda'' + \frac{|\eta|}{n}\right) |\eta| + \frac{1}{n}\bar{Rc}(V,V)|\eta|^2
\\&=\left (\lambda'' + \frac{\lambda}{n}\right) \lambda -\lambda''\lambda + \frac{(n-1)}{n}\left (\lambda''\lambda + \rho-\lambda'^2 \right)\bar{g}(V^S,V^S)> 0
\end{align}
Then we can also obtain lower bound on $H$ since this only requires $\lambda'' \ge 0$.
\begin{align}
\mathcal{G}(f,\lambda) = \lambda''\lambda > 0
\end{align}
Then by the fact that $\lambda ' \ge 0$ we can obtain an upper bound on $|\eta|$ and it is a fairly mild condition to assume that $Rc(V,V) \ge -C$ in this case in order to get an upper bound on $H$. 

So we obtain long time existence in this case but it is difficult to find choices of $\lambda$ which yield asymptotically flat manifolds with positive scalar curvature in this case. It is fairly easy to find examples which are asymptotically hyperbolic with $R \ge -(n+1)n$ such as $\lambda_{l,p,q}(r)= \sinh(lr) + \frac{1}{r^p} +e^{-qr}$ for $0<p \le \frac{1}{n-1}$ and $0<l,q \le \sqrt{n(n-1)}$.
\end{Ex}

\begin{Ex}\label{Ex2}
\begin{align}
\Pi = S^n, f=0, \lambda \ge 0, \lambda' \ge 0 \text{ and  } \lambda'' \le 0
\end{align}
In this case it is quick to check that for choices of $V$ one can find
\begin{align}
\mathcal{J}(f,\lambda,V) \ge 0
\end{align}
but the issue is that we will not be able to obtain the lower bound on $H$, since this requires $\lambda'' \ge 0$ which is crucial to long time existence. Though, by the assumption that $\lambda ' \ge 0$ we can obtain an upper bound on $|\eta|$ and it is a fairly mild condition to assume that $Rc(V,V) \ge -C$ in this case in order to get an upper bound on $H$.
\end{Ex}

\begin{Ex}\label{Ex3}
$\Pi = S^n$, $\lambda=r$, $r^2f_{rr} + rf_r > \delta > 0$ and 
\begin{align}
\hat{g}(\mathcal{G}(f,r),\eta)&=\hat{g}(\hat{\nabla}_{\eta} \hat{\nabla}f  + \hat{\nabla}f + |\hat{\nabla}f|^2 \eta,\eta)
\\&=e^{2f}\bar{g}(\hat{\nabla}_{\eta} \hat{\nabla}f  + \hat{\nabla}f + |\hat{\nabla}f|^2 \eta,\eta)
\\&= e^{2f}\left (\eta(\eta(f)) -(\hat{\nabla}_{\eta}\eta)f + \eta(f) +|\hat{\nabla}f|^2\bar{g}(\eta,\eta)\right)
\\&= e^{2f}\left (r\partial_r(r\partial_r(f)) -\hat{\psi}\eta(f) + \eta(f) +|\hat{\nabla}f|^2\bar{g}(\eta,\eta)\right)
\\&= e^{2f}\left (r^2f_{rr} + rf_r +|\hat{\nabla}f|^2\bar{g}(\eta,\eta) - \eta(f)^2\right)
\\&= e^{2f}\left (r^2f_{rr} + rf_r +e_i(f)^2\right) \ge e^{2f}\left (r^2f_{rr} + rf_r\right)
\end{align}
where $e_i$ is a orthonormal basis of $S^n$ w.r.t $\hat{g}$. So we see that we just need to choose $f$ so that $rf_{rr} + f_r > \delta$ which is nice since we have freedom in how we choose $f$ in the $S^n$ direction.

The idea of the next calculation is to rewrite $\hat{g}(\mathcal{J}(f,\lambda,V),\eta)$ solely in terms of $f$ and coordinate derivatives of $f$ so that we can get an idea of the freedom we have to choose $f$ while requiring $\hat{g}(\mathcal{J}(f,\lambda,V),\eta)$ to be bounded away from zero.
\begin{align}
\hat{g}&(\mathcal{J}(f,r,V),\eta)=\hat{g}(\hat{\nabla}_{\eta} \hat{\nabla}f +\hat{\nabla}f + (|\hat{\nabla}f|^2+\frac{1}{n}) \eta + \frac{1}{n}\bar{Rc}(V,V) \eta,\eta)
\\&= e^{2f}\left (r^2f_{rr} + rf_r +|\hat{\nabla}f|^2\bar{g}(\eta,\eta) - \eta(f)^2 +\frac{r^2}{n} +  \frac{r^2}{n}\bar{Rc}(V,V)\right)
\\&= e^{2f}\left (r^2f_{rr} + rf_r +r^2|\bar{\nabla}f|^2-\frac{(n-1)}{n}r^2|\bar{\nabla}f|^2\bar{g}(V,V)+\frac{(n-1)}{n} r^2V(f)^2 - \eta(f)^2 +\frac{r^2}{n}\right)
\\& - \frac{1}{n}r^2e^{2f}\left ((n-1)\bar{\nabla}\bar{\nabla}f(V,V) + \bar{\Delta} f\bar{g}(V,V)\right)
\end{align}
So if we assume that $V$ is a vector of unit length w.r.t. $\bar{g}$ then we find
\begin{align}
\hat{g}(\mathcal{J}(f,r,V),\eta)&= e^{2f}\left (r^2f_{rr} + rf_r +\frac{r^2}{n}|\bar{\nabla}f|^2+\frac{(n-1)}{n} r^2V(f)^2 - \eta(f)^2 +\frac{r^2}{n}\right)
\\& - \frac{1}{n}r^2e^{2f}\left ((n-1)\bar{\nabla}\bar{\nabla}f(V,V) + \bar{\Delta} f\right)
\end{align}
and furthermore, we can compute $\bar{\Delta} f$
\begin{align}
\bar{\Delta} f &= f_{rr} + \frac{f_{e_ie_i}}{r^2} -(\bar{\nabla}_{\partial_r}\partial_r)(f)-\frac{1}{r^2} \sigma^{ij}(\bar{\nabla}_{e_i}e_j)(f)=f_{rr} + \frac{\tilde{\Delta}f}{r^2}
\end{align}
where $\sigma, \tilde{\nabla}$ represents the metric and covariant derivatives on the sphere $S^n$, respectively, which depend on the basis, $\{e_1,...e_n\}$ chosen.

So now if we let $n=2$ we find
\begin{align}
\hat{g}&(\mathcal{J}(f,r,V),\eta)=r^2e^{2f}\left (f_{rr} + \frac{f_r}{r} +\frac{1}{2}(|\bar{\nabla}f|^2+V(f)^2 - 2\partial_r(f)^2 +1)\right )
\\&-\frac{r^2}{2}e^{2f}\left ( V(V(f) - (\bar{\nabla}_V V)(f)+f_{rr}+ \frac{\tilde{\Delta}f}{r^2}\right)
\\&=\frac{r^2}{2}e^{2f}\left (f_{rr} +2\frac{f_r}{r} +(|\bar{\nabla}f|^2+V(f)^2 - 2\partial_r(f)^2 +1)-V(V(f)) + (\bar{\nabla}_V V)(f) - \frac{\tilde{\Delta}f}{r^2}\right )
\end{align}

Now if we let $\displaystyle V = V_0\partial_r + V_i \partial_{e_i}$ where $V_0^2 + V_i^2 = 1$ then we find
\begin{align}
V(f)^2 &= V_0^1f_r^2 + V_i^2 f_{e_i}^2 
\\V(V(f)) &= (V_0\partial_r + V_i \partial_{e_i})((V_0\partial_r + V_j \partial_{e_j})(f))
\\&= V_0^2 f_{rr} + V_0V_{0r} f_r + 2V_0V_jf_{re_j}+V_0V_{jr} f_{e_j}+V_iV_{0i}f_r + V_iV_j f_{e_ie_j} + V_i V_{je_i}f_{e_j}
\\&= V_0^2 f_{rr}+ 2V_0V_jf_{re_j} + V_iV_j f_{e_ie_j} + V_0V_{0r} f_r +V_0V_{jr} f_{e_j}+V_iV_{0e_i}f_r + V_i V_{je_i}f_{e_j}
\\(\bar{\nabla}_V V)(f)&=(\bar{\nabla}_{V_0\partial_r + V_i \partial_{e_i}} (V_0\partial_r + V_j \partial_{e_j})(f)
\\&=V_0^2(\bar{\nabla}_{\partial_r} \partial_r)(f)+V_0V_j(\bar{\nabla}_{\partial_r} e_j)(f)+V_iV_0(\bar{\nabla}_{e_i} \partial_r)(f)+V_iV_j(\bar{\nabla}_{e_i} e_j)(f)
\\&+ V_0V_{0r} f_r+V_iV_{0e_i}f_r +V_0V_{jr} f_{e_j} + V_i V_{je_i}f_{e_j}
\\&=2V_0V_i\frac{f_{e_i}}{r}+V_iV_j(\tilde{\nabla}_{e_i} e_j)(f)+ V_0V_{0r} f_r+V_iV_{0e_i}f_r +V_0V_{jr} f_{e_j} + V_i V_{je_i}f_{e_j}
\end{align}
where $\tilde{\nabla}$ represents covariant derivatives on the sphere $S^n$ which depend on the basis chosen. Now we can find
\begin{align}
\hat{g}(\mathcal{J}(f,r,V),\eta)&=\frac{r^2}{2}e^{2f}\left (1+(V_0^2-1) f_r^2 +(\frac{1}{r^2}+ V_i^2)f_{e_i}^2 +(1-V_0^2) f_{rr}- 2V_0V_jf_{re_j} - V_iV_j f_{e_ie_j})\right)
\\&+\frac{r^2}{2}e^{2f}\left (2\frac{f_r}{r}+2V_0V_i\frac{f_{e_i}}{r}+V_iV_j(\tilde{\nabla}_{e_i} e_j)(f)- \frac{\tilde{\Delta}f}{r^2}\right)
\end{align}
In particular we have the formula for the scalar curvature 
\begin{align} 
\hat{R} &=  -ne^{-2f} \left( 2 \bar{\Delta}f  +(n-1) |\bar{\nabla} f|^2 \right)
\\&=-ne^{-2f} \left(2f_{rr} + 2\sigma^{ij}\frac{f_{e_ie_j}}{r^2}-2\frac{1}{r^2} \delta^{ij}(\tilde{\nabla}_{e_i}e_j)(f) + (n-1) f_r^2 + (n-1)\frac{f_{e_i}^2}{r^2}\right )
\end{align}
where we see by the other conditions that it is difficult to have $\hat{R} \ge 0$ but it is certainly possible to impose the condition that $\hat{R} \ge -n(n+1)$.
\end{Ex}

\begin{Ex}\label{Ex4}
$Rc^{\Pi} \ge n\rho \sigma$, $\lambda >0$, $f_{rr} + \frac{\lambda'}{\lambda}f_r +e^{-2f}\frac{\lambda''}{\lambda}\ge \delta > 0$
\begin{align}
\hat{g}&(\mathcal{G}(f,r),\eta)=\hat{g}(\hat{\nabla}_{\eta} \hat{\nabla}f +e^{-2f}\lambda'' \partial_r + \lambda'\hat{\nabla}f + |\hat{\nabla}f|^2 \eta,\eta)
\\&=e^{2f}\bar{g}(\hat{\nabla}_{\eta} \hat{\nabla}f+e^{-2f}\lambda'' \partial_r + \lambda'\hat{\nabla}f + |\hat{\nabla}f|^2 \eta,\eta)
\\&= e^{2f}\left (\eta(\eta(f)) -(\hat{\nabla}_{\eta}\eta)f + \lambda'\eta(f) +|\hat{\nabla}f|^2\bar{g}(\eta,\eta)+e^{-2f}\lambda'' \lambda\right)
\\&= e^{2f}\left (\lambda\partial_r(\lambda\partial_r(f)) -\hat{\psi}\eta(f) + \lambda'\eta(f) +|\hat{\nabla}f|^2\bar{g}(\eta,\eta)+e^{-2f}\lambda'' \lambda \right)
\\&= e^{2f}\left (\lambda^2f_{rr} + \lambda\lambda'f_r +|\hat{\nabla}f|^2\bar{g}(\eta,\eta) - \eta(f)^2+e^{-2f}\lambda'' \lambda\right)
\\&= e^{2f}\left (\lambda^2f_{rr} + \lambda\lambda'f_r +e_i(f)^2+e^{-2f}\lambda'' \lambda\right) \ge e^{2f}\lambda^2\left ( f_{rr} + \frac{\lambda'}{\lambda}f_r +e^{-2f}\frac{\lambda''}{\lambda} \right) \label{GenCond1}
\end{align}
where again we see the freedom in the $\Pi^n$ direction. 

The idea of the next calculation is to rewrite $\hat{g}(\mathcal{J}(f,\lambda,V),\eta)$ solely in terms of $f$ and $\lambda$ and coordinate derivatives of $f$ and $\lambda$ so that we can get an idea of the freedom we have to choose $\lambda$ and $f$ while requiring $\hat{g}(\mathcal{J}(f,\lambda,V),\eta)$ to be bounded away from zero.
\begin{align}
\hat{g}(\mathcal{J}(f,\lambda,V),\eta)&=\hat{g}(\hat{\nabla}_{\eta} \hat{\nabla}f+e^{-2f}\lambda''\partial_r +\lambda'\hat{\nabla}f + (|\hat{\nabla}f|^2+\frac{1}{n}) \eta + \frac{1}{n}Rc(V,V) \eta,\eta)
\\&= e^{2f}\left (\lambda^2f_{rr} + \lambda\lambda'f_r +|\hat{\nabla}f|^2\bar{g}(\eta,\eta)-\frac{(n-1)}{n}\lambda^2|\bar{\nabla}f|^2\bar{g}(V,V)\right)
\\&+ e^{2f}\left (\frac{(n-1)}{n} \lambda^2V(f)^2 - \eta(f)^2+e^{-2f}\lambda'' \lambda+\frac{\lambda^2}{n}\right)
\\& - \lambda^2e^{2f}\left (\frac{(n-1)}{n}\bar{\nabla}\bar{\nabla}f(V,V) + \frac{1}{n}\bar{\Delta} f\bar{g}(V,V)\right) + e^{2f}\frac{\lambda^2 }{n}\bar{Rc}(V,V)
\end{align}
Now we can compute $\bar{\Delta} f$
\begin{align}
\bar{\Delta} f &= f_{rr} + \frac{f_{e_ie_i}}{\lambda^2} -(\bar{\nabla}_{\partial_r}\partial_r)(f)-\frac{1}{\lambda^2} \sigma^{ij}(\bar{\nabla}_{e_i}e_j)(f)=f_{rr} +\frac{\tilde{\Delta}f}{\lambda^2}\label{LapLam}
\end{align}
where $\sigma, \tilde{\nabla}$ represents the metric and covariant derivatives on $\Pi^n$, respectively, which depend on the basis, $\{e_1,...e_n\}$ chosen.

Furthermore, if assume that $V$ is unit length w.r.t. $\bar{g}$ and use \eqref{LapLam} then we find
\begin{align}
&\hat{g}(\mathcal{J}(f,\lambda,V),\eta)\ge e^{2f}\left (\lambda^2f_{rr} + \lambda\lambda'f_r +\frac{\lambda^2}{n}|\bar{\nabla}f|^2+\frac{(n-1)}{n} \lambda^2V(f)^2 - \eta(f)^2+e^{-2f}\frac{\lambda''}{\lambda} +\frac{\lambda^2}{n}\right)
\\& - \lambda^2e^{2f}\left (\frac{(n-1)}{n}V(V(f)) -\frac{(n-1)}{n}(\bar{\nabla}_V V)f + \frac{1}{n}f_{rr} + \frac{\tilde{\Delta}f}{n\lambda^2}\right)
\\ &+e^{2f}\lambda^2 \left(-\frac{\lambda''}{\lambda} + \frac{(n-1)}{n} (\frac{\lambda''}{\lambda} +\frac{\rho-\lambda'^2}{\lambda^2})\bar{g}(V^S,V^S) \right)
\\&= \lambda^2 e^{2f}\left (\frac{\lambda'}{\lambda}f_r +\frac{1}{n}|\bar{\nabla}f|^2-\frac{(n-1)}{n}V(V(f))+\frac{(n-1)}{n}(\bar{\nabla}_V V)f- \frac{\tilde{\Delta}f}{n\lambda^2}- (1-e^{-2f})\frac{\lambda''}{\lambda} \right)
\\&+\lambda^2 e^{2f}\left ( \frac{(n-1)}{n}f_{rr}+\frac{(n-1)}{n} V(f)^2 -f_r^2 +\frac{1}{n}+ \frac{(n-1)}{n} (\frac{\lambda''}{\lambda} +\frac{\rho-\lambda'^2}{\lambda^2})\bar{g}(V^S,V^S)\right)
\end{align}
and if we further assume that $n=2$ and let $\displaystyle V = V_0\partial_r + V_i \partial_{e_i}$ where $V_0^2 + V_i^2 = 1$ then we find
\begin{align}
&\hat{g}(\mathcal{J}(f,\lambda,V),\eta)\ge \frac{\lambda^2}{2} e^{2f}\left (2\frac{\lambda'}{\lambda}f_r+|\bar{\nabla}f|^2-V(V(f))+(\bar{\nabla}_V V)f- \frac{\tilde{\Delta}f}{\lambda^2}- (1-e^{-2f})\frac{\lambda''}{\lambda} \right)
\\&+\frac{\lambda^2}{2} e^{2f}\lp f_{rr}+1+V(f)^2 -2f_r^2 +\left (\frac{\lambda''}{\lambda} +\frac{\rho-\lambda'^2}{\lambda^2}\right)|V^S|_{\bar{g}}^2\rp
\\&= \frac{\lambda^2}{2} e^{2f}\left (2\frac{\lambda'}{\lambda}f_r+(1-V_0^2) f_{rr}- 2V_0V_jf_{re_j} - V_iV_j f_{e_ie_j}+V_iV_j(\tilde{\nabla}_{e_i} e_j)(f)- \frac{\tilde{\Delta}f}{\lambda^2} \right)
\\&+\frac{\lambda^2}{2} e^{2f}\left (1+(V_i^2+e^{-2f} - 1)\frac{\lambda''}{\lambda} +V_i^2\frac{\rho-\lambda'^2}{\lambda^2}+(V_0^2-1)f_r^2+(\frac{1}{\lambda^2} +V_i^2)f_{e_i}^2+2V_0V_i\frac{\lambda' f_{e_i}}{\lambda}\right)
\end{align}
by which we see that the conditions $\lambda'' \ge 0$, $\rho > 0$, $\lambda'^2 \le \rho$, $f_r \ge 0$, $f_{rr}\ge 0$, and the $\Pi^n$ hessian of $f$ being negative are preferred in this case in order to bound $\hat{g}(\mathcal{J}(f,\lambda,V),\eta)$ away from zero. Notice that these preferences agree with the condition \eqref{GenCond1} in order to get long time existence. Any problem terms which involve derivatives in the $\Pi^n$ direction can be chosen as small as needed to preserve the bounds from the radial choices and $V$ can be chosen as close to $\partial_r$ as necessary.

Now we calculate the scalar curvature using the formulas found in \cite{LP}
\begin{align}
\hat{R} &= e^{-2f} \left ( \bar{R} - 2n \bar{\Delta}f - n(n-1) |\bar{\nabla} f|^2 \right)
\\	&\ge e^{-2f} \left ( \frac{n(n-1)(\rho - \lambda'^2)}{\lambda^2} -\frac{2n\lambda''}{\lambda}\right)
\\&-e^{-2f} \left (  2n \left(f_{rr} + \frac{\tilde{\Delta}f}{\lambda^2}\right) + n(n-1) (f_r^2 + \frac{f_{e_i}^2}{\lambda^2}) \right)
\end{align}
where we see now that it is reasonable to choose an $f$ and $\lambda$ so that $\hat{R} \ge 0$ in this case because of the extra freedom that $\lambda$ provides in the equation above. Some of the conditions which cause $\hat{R} \ge 0$ are at odds with the conditions needed above but there is enough freedom to choose combinations of preferences to satisfy all conditions necessary to find existence of solutions to IMCF for some $T>0$ which is relevant for proving stability of the PMT and RPI as in \cite{BA2}.

More excitingly we see that one can easily satisfy the condition $\hat{R} \ge -6$ in order to get long time existence of IMCF for asymptotically hyperbolic manifolds. When you combine this fact with the asymptotic results of section \ref{sec:Asymptotics} and the results of the author \cite{BA3} one can show stability of the PMT and RPI for asymptotically hyperbolic manifolds.
\end{Ex}


\begin{thebibliography}{99}

\bibitem{BA} B. Allen, \textit{ODE Maximum Principle at Infinity and Non-Compact Solutions of IMCF in Hyperbolic Space},  arXiv:1610.01211[math.DG], 4 Oct 2016.

\bibitem{BA2} B. Allen, \textit{Inverse Mean Curvature Flow and the Stability of the Positive Mass Theorem and Riemannian Penrose Inequality Under $L^2$ Metric Convergence}, arxiv:1705.00591[math.DG], 1 May 2017.

\bibitem{BA3} B. Allen, \textit{Stability of the Positive Mass Theorem and Riemannian Penrose Inequality for Asymptotically Hyperbolic Manifolds Foliated by Inverse Mean Curvature Flow}, arxiv:1707.09388[math.DG], 28 July 2017.

\bibitem{Ca} A. Caminha, \textit{The Geometry of Closed Conformal Vector Fields on Riemannian Spaces}, Bull. of the Braz. Math. Soc. \textbf{42} (2011)

\bibitem{CL} I. Castro, A. M. Lerma, \textit{Homothetic Solitons for the Inverse Mean Curvature Flow}, arXiv:1511.03826 (2015), to appear in Results Math.

\bibitem{DGS} M. Dahl, R. Gicquaud and A. Sakovich, \textit{Asymptotically Hyperbolic Manifolds with Small Mass}, Comm. Math. Phys. \textbf{325} (2014) 757-801.

\bibitem{QD} Q. Ding, \textit{The inverse mean curvature flow in rotationally symmetric spaces}, Chinese Annals of Mathematics - Series B (2010), 1-18.

\bibitem{CG1} C. Gerhardt, \textit{Flow of Nonconvex Hypersurfaces into Spheres}, J. Diff. Geom. \textbf{32} (1990), 299-314.

\bibitem{CG2} C. Gerhardt, \textit{Curvature Problems}, Ser. in Geom. and Topol., col. 39, International Press, Somerville, MA, (2006).

\bibitem{CG3} C. Gerhardt, \textit{Inverse curvature flows in hyperbolic space}, J. Diff. Geom. \textbf{89}, 487 - 527, (2011)

\bibitem{H} G. Huisken, \textit{Contracting Convex Hypersurfaces in Riemannian Manifolds by their Mean Curvature}, Invent. Math., \textbf{84} (1986), 463-480 

\bibitem{HI} G. Huisken and T. Ilmanen, \textit{The Inverse Mean Curvature Flow and the Riemannian Penrose Inequality}, J. Differential Geom. \textbf{59} (2001), 353-437.

\bibitem{K2} N.V. Krylov, \textit{Nonlinear Elliptic and Parabolic Equations of the Second Order}, Mathematics and its Applications, Dordrecht: Reidel (1987)

\bibitem{L} D. Lee, \textit{On the Near-Equality Case of the Positive Mass Theorem}, Duke Math. J. \textbf{1} (2009), 63-80.

\bibitem{LP} J. Lee and T. Parker, \textit{The Yamabe Problem}, Bull. Amer. Math. Soc. (N.S.) \textbf{17} (1987) 37-91.

\bibitem{Mu} T. Mullins, \textit{On the inverse mean curvature flow in warped product manifolds},
arxiv:610.0523, 4 Oct. 2016.

\bibitem{S} J. Scheuer, \textit{The inverse mean curvature flow in warped cylinders of non-positive radial curvature}, Adv. in Math \textbf{306} p. 1130-1163 (2017).

\bibitem{S2} J. Scheuer, \textit{Inverse Curvature Flows in Riemannian Warped Products}, arXiv:1712.09521 [math.DG], 27 Dec 2017.

\bibitem{YT} Y. Tashiro, \textit{Complete Riemannian Manifolds and Some Vector Fields} Trans. Amer. Math. Soc. \textbf{117} (1965 251-275.

\bibitem{U} J. Urbas, \textit{On the Expansion of Starshaped Hypersurfaces by Symmetric Functions of their Principal Curvatures}, Math. A. \textbf{205} (1990), 355-372.

\bibitem{Z} H. Zhou, \textit{Inverse Mean Curvature Flows in Warped Product Manifolds}, J. Geom. Anal.  (2017), 1-24.
\end{thebibliography}
\end{document}